\documentclass[12pt,a4paper]{amsart}
\makeatletter
\renewcommand\normalsize{%
    \@setfontsize\normalsize{11.7}{14pt plus .3pt minus .3pt}%
    \abovedisplayskip 10\p@ \@plus4\p@ \@minus4\p@
    \abovedisplayshortskip 6\p@ \@plus2\p@
    \belowdisplayshortskip 6\p@ \@plus2\p@
    \belowdisplayskip \abovedisplayskip}
\renewcommand\small{%
    \@setfontsize\small{9.5}{12\p@ plus .2\p@ minus .2\p@}%
    \abovedisplayskip 8.5\p@ \@plus4\p@ \@minus1\p@
    \belowdisplayskip \abovedisplayskip
    \abovedisplayshortskip \abovedisplayskip
    \belowdisplayshortskip \abovedisplayskip}
\renewcommand\footnotesize{%
    \@setfontsize\footnotesize{8.5}{9.25\p@ plus .1pt minus .1pt}%%
    \abovedisplayskip 6\p@ \@plus4\p@ \@minus1\p@
    \belowdisplayskip \abovedisplayskip
    \abovedisplayshortskip \abovedisplayskip
    \belowdisplayshortskip \abovedisplayskip}
\ifdefined\pdfpagewidth
\else
\fi
\calclayout
\makeatother

\usepackage{eucal}

\usepackage{graphicx} 

\usepackage{comment}
\usepackage{color}
\usepackage{amssymb,graphicx, stmaryrd}
\usepackage[normalem]{ulem}

\usepackage{amsmath,amssymb}  
\usepackage{xparse}

\DeclareRobustCommand{\g}{\ensuremath{\mathrm{g}}}

\usepackage{ytableau} 
\ytableausetup{boxsize=1em}

\usepackage[colorlinks,  linkcolor=blue, citecolor=blue, linktocpage,urlcolor=blue]{hyperref}

\theoremstyle{definition}
\newtheorem{theorem}{Theorem}[section]

\newtheorem{lemma}[theorem]{Lemma}

\newtheorem{remark}[theorem]{Remark}

\newtheorem{question}[theorem]{Question}

\author{Kyle Broder}

\title{Cohomogeneity-one Einstein metrics on \\ closed $4$-manifolds}

\usepackage{fancyhdr}
\usepackage{amssymb}

\usepackage{graphicx}

\usepackage{tikz}
\usepackage{tikz-cd}
\usepackage{float}

\usepackage{amsmath,amssymb,amsthm,mathtools}
\thanks{\em{k.broder@uq.edu.au}.}
\thanks{The University of Queensland,  St. Lucia,  QLD 4067, Australia}

\keywords{Einstein metrics, cohomogeneity-one, self-dual, anti-self-dual.}
\subjclass{53C25, 53C20}

\providecommand{\noopsort}[1]{}
\begin{document}

\maketitle

\begin{abstract}
    We classify the Einstein metrics on closed 4-manifolds whose isometry group has dimension at least 3. Consequently, we show that the cohomogeneity-one Einstein metrics on a closed $4$-manifold are locally symmetric or homothetic to the Page metric on $\mathbf{CP}^2 \sharp \overline{\mathbf{CP}}^2$.
\end{abstract}

\section{\textbf{Introduction}}
\noindent A Riemannian metric g on a closed manifold $M^n$ is \emph{Einstein} if \begin{eqnarray*}\label{eqn:Einstein}
    \text{Ric}(\g) &=& \lambda \g, \qquad \lambda \in \mathbf{R}.
\end{eqnarray*} One of the most effective mechanisms for producing Einstein metrics with $\lambda>0$ comes from considering \emph{cohomogeneity-one} actions---that is, a compact Lie group $\mathsf{G}$ acts isometrically on $(M,\g)$ with one-dimensional orbit space (see, e.g., \cite{Ziller2009GeometryCohomogeneityOne}). B\"ohm \cite{Bohm1998InhomogeneousEinstein} used cohomogeneity-one techniques to produce the first inhomogeneous Einstein metrics on spheres. Moreover, to date, the only non-round Einstein metrics on even-dimensional spheres have been found using cohomogeneity-one methods \cite{FoscoloHaskins2017, Chi2024, NienhausWink2025TenSphere, ButtsworthHodgkinson2026S12}.

On closed $4$-manifolds, methods from complex and symplectic geometry have led to several classification results for Einstein metrics. LeBrun \cite{LeBrun2012EinsteinHermitian} proved that a Hermitian Einstein metric on a closed \(4\)-manifold is either K\"ahler--Einstein or, up to rescaling and isometry, one of two exceptional conformally K\"ahler examples: the Page metric \cite{Page1978CompactRotatingInstanton} on \(\mathbf{CP}^2\sharp\overline{\mathbf{CP}}^{2}\) and the
Chen--LeBrun--Weber metric \cite{ChenLeBrunWeber2008ConformallyKahlerEinstein} on \(\mathbf{CP}^2\sharp 2\overline{\mathbf{CP}}^{2}\). Earlier, LeBrun \cite{LeBrun1997EinsteinMetricsComplexSurfaces} showed that a compact complex surface admitting a non-K\"ahler Einstein Hermitian metric must be the blow-up of \(\mathbf{CP}^2\) at one, two, or three points, and that the isometry group of the metric contains a two-torus. The closed $4$-manifolds admitting K\"ahler--Einstein metrics with $\lambda>0$ were classified by Tian \cite{Tian1987KahlerEinsteinCertain, Tian1990CalabiConjectureSurfaces}.

In contrast, general classification results for cohomogeneity-one Einstein metrics are scarce. The problem of classifying cohomogeneity-one Einstein metrics is raised explicitly by Anderson \cite[page 6]{Anderson2010SurveyEinsteinMetrics} (cf. \cite{dammerman2004metrics,Donovan2025,Seshadri2004EinsteinFourManifolds,Wang2026ComputerAssistedEinsteinMetrics}). One of the strongest previous rigidity results is due to B\'erard-Bergery and Derdzi\'nski \cite{BerardBergery1982NouvellesEinstein,Derdzinski1983SelfDualKahler} (see also \cite[Theorem 9.127]{besse1987einstein} and \cite{PetersenZhu1995}). It implies that if \((M^4, \g)\) is closed, simply connected, and Einstein, and either \(\dim \mathsf{Isom}(M,\g)\geq 4\) or \(M\) admits an isometric \(\mathsf{SO}(3)\)-action with principal orbits \(S^2\) or \(\mathbf{RP}^2\), then \((M,\g)\) is either locally symmetric or homothetic to the Page metric. Recent work has continued to develop numerical and computer-assisted approaches to this problem \cite{Donovan2025,Wang2026ComputerAssistedEinsteinMetrics}.

The main result of this paper is the following classification theorem.

\begin{theorem}\label{thm:Main-theorem}
    Let $(M^4,\g)$ be a connected closed Einstein $4$-manifold with $\dim \mathsf{Isom}(M,\g) \geq 3$. Then either \(\g\) is flat or the universal cover is homothetic to one of the following: \begin{eqnarray*}
        (S^4,\g_{\text{rd}}), \qquad (\mathbf{CP}^2,\g_{\text{FS}}), \qquad (S^2 \times S^2, \g_{\text{rd}} \oplus \g_{\text{rd}}), \qquad (\mathbf{CP}^2 \sharp \overline{\mathbf{CP}}^2, \g_{\text{Page}}).
    \end{eqnarray*} 
\end{theorem}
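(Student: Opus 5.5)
The plan is to pass to the universal cover and then split according to the identity component $\mathsf{G} = \mathsf{Isom}_0(M,\g)$, which is a compact Lie group of dimension at least $3$. Lifting Killing fields to the universal cover $\widetilde{M}$ keeps the Lie algebra of dimension at least $3$.

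\textbf{Non-positive Einstein constant.} If $\lambda \le 0$, Bochner's theorem shows every Killing field on a closed Einstein manifold is parallel when $\lambda = 0$, and vanishes when $\lambda < 0$. The case $\lambda<0$ is then impossible. For $\lambda=0$, there are at least three linearly independent parallel vector fields, so $\widetilde{M}$ splits off $\mathbf{R}^3$. The remaining one-dimensional factor is flat, so $\g$ is flat.

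\textbf{Positive Einstein constant, $\dim \mathsf{G} \geq 4$.} Now $\lambda>0$, so Myers' theorem makes $\widetilde{M}$ compact and $\pi_1$ finite. Hence $\mathsf{G}$ lifts, up to a finite cover, to an isometric action on the simply connected closed $\widetilde{M}$ of the same dimension. If $\dim \mathsf{G} \ge 4$, the B\'erard-Bergery--Derdzi\'nski result quoted above applies directly. It gives that $\widetilde{M}$ is locally symmetric, and hence one of the first three listed spaces by the classification of compact simply connected symmetric $4$-dimensional Einstein spaces, or it is the Page metric.

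\textbf{Positive Einstein constant, $\dim \mathsf{G} = 3$.} Compact $3$-dimensional Lie groups are $T^3$, $\mathsf{SU}(2)$ or $\mathsf{SO}(3)$; note $S^1\times$(2-dim) is impossible since compact 2-dimensional groups are tori. A $T^3$ effective action on a $4$-manifold has principal orbits $T^3$, so $M$ is cohomogeneity one with torus orbits. Likewise $\mathsf{SU}(2)$ and $\mathsf{SO}(3)$ act with principal orbits of dimension $2$ or $3$. For two-dimensional orbits $S^2$ or $\mathbf{RP}^2$, the cited result applies. The remaining cases are cohomogeneity one with principal orbit $T^3$, or $S^3/\Gamma$ with isotropy finite.

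\textbf{Cohomogeneity one with three-dimensional principal orbits.} This is the heart of the proof and the main obstacle. I would write $\g = dt^2 + g_t$ on the principal part, with $g_t$ a left-invariant metric on the orbit. The Einstein equations then become a Bianchi-type ODE system for $g_t$ on an interval $[0,L]$, with smoothness conditions at the two singular orbits.

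\emph{Abelian case.} For $\mathsf{G} = T^3$, a singular orbit must be $T^2$, since $T^3$ can only collapse a circle. The ODE is a Kasner-type system with $\mathrm{tr}$ of the shape operator monotone. I would show that $\lambda>0$ together with two collapses is incompatible unless the orbit space degenerates, or reduce to a $T^2$-symmetric situation that fails to be Einstein for dimension reasons. Alternatively, one can note that $T^3$-actions on simply connected 4-manifolds do not exist with 3-dimensional principal orbits effectively; either argument rules this case out.

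\emph{Non-abelian case.} For $\mathsf{SU}(2)$ I would first reduce to the diagonal case, using that the Einstein condition forces the off-diagonal Ricci terms to vanish. Diagonalizing $g_t = a^2\sigma_1^2+b^2\sigma_2^2+c^2\sigma_3^2$ gives the standard system. The singular orbits are points, $S^2 = \mathsf{SU}(2)/U(1)$, or $\mathbf{RP}^2$-type quotients.
\begin{itemize}
\item If some isotropy contains $U(1)$, there is an extra symmetry, $\dim \mathsf{Isom}\geq 4$, or at least a $U(2)$-type ansatz $a=b$.
\item I would then show that smoothness forces $a=b$ throughout.
\item With $a=b$, the system integrates explicitly. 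This is Page's solution and its relatives, and it yields round $S^4$, Fubini--Study, or the Page metric.
\end{itemize}
Proving $a=b$ is what I expect to be hardest. I would attempt it via a monotone quantity such as $(a^2-b^2)$ times a suitable weight satisfying a maximum principle. Failing that, I would use the Hitchin/Tod classification of $\mathsf{SU}(2)$-invariant self-dual Einstein metrics, combined with a Weyl-curvature splitting argument.
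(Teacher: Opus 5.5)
There is a genuine gap. It lies in the three $\mathsf{SO}(3)$-actions with three-dimensional principal orbits that the Parker--Hoelscher list forces you to treat:
\begin{itemize}
\item on $S^4\subset \mathrm{Sym}^2_0(\mathbf{R}^3)$, with principal isotropy $\mathbf{Z}_2\oplus\mathbf{Z}_2$ and two $\mathbf{RP}^2$ singular orbits;
\item on $\mathbf{CP}^2$, with singular orbits $\mathbf{RP}^2$ and the conic;
\item diagonally on $S^2\times S^2$.
\end{itemize}
Your plan handles every non-abelian case by proving that two of $a,b,c$ coincide and then integrating the $\mathsf{U}(2)$-invariant (Page-type) system. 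For these actions that conclusion is false for the very metrics you must recover. Different Killing directions collapse at the two singular orbits, so the boundary values alone forbid $f_i\equiv f_j$. Concretely, along a normal geodesic the Killing lengths are, up to a common constant:
\begin{itemize}
\item for Fubini--Study, $\sin t$, $\cos t$, $\cos 2t$;
\item for the product metric on $S^2\times S^2$, $1$, $\sin(\theta/2)$, $\cos(\theta/2)$;
\item for round $S^4$, $|\mu_j-\mu_k|$ as $\mathrm{diag}(\mu_1,\mu_2,\mu_3)$ runs along the geodesic, i.e.\ three sines with distinct phases.
\end{itemize}

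The paper needs a different mechanism here, and this is where most of its work lies. It passes to the connection coefficients $A_i,B_i$ of $\Lambda^2_\pm$, which obey the closed quadratic systems \eqref{eqn:Ai+Bi-systems}--\eqref{eqn:Bi-systems-fix}. It also uses the eigenvalues $a_i,b_i$ of $\mathfrak{R}|_{\Lambda^2_\pm}$, which obey \eqref{eqn:ai-ode}--\eqref{eqn:bi-ode}. Invariant cones together with the indicial-root Lemma~\ref{lem:regular-singular-germ} pin down the free parameter at one end. On $\mathbf{CP}^2$ and $S^2\times S^2$ this forces $B_1=B_2=0$, resp.\ $A_1=A_2=0$, i.e.\ a parallel (anti-)self-dual $2$-form. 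The metric is then K\"ahler--Einstein by Lemma~\ref{lem:SD-Kahler}, and Tian plus Bando--Mabuchi finish. On $S^4$ it forces $a_1=a_2=a_3$, so $W^+=0$ and the signature formula gives conformal flatness. Your fallback through the Hitchin/Tod classification presupposes exactly this half-conformal-flatness, which is the hard step, and you give no argument for it.

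In the $\mathsf{SU}(2)$ cases the decisive step is only announced, and your first bullet is wrong. A circle isotropy forces the two non-collapsing lengths to agree on that singular orbit only; it gives neither an extra Killing field nor $a\equiv b$. What works is the scale-invariant $u_{ij}=\log(f_i/f_j)$, which satisfies \eqref{eqn:main-eqns-for-u}. Maximize over a set of ordered pairs whose boundary values are all $\le 1$ and which is closed under $(i,j)\mapsto(k,j)$. At an interior maximum this gives $f_i\ge f_k$, so the right-hand side is positive, a contradiction. The result is $f_1=f_2=f_3$ on $S^4$, and equality of the two non-collapsing lengths on $\mathbf{CP}^2$ and $\mathbf{CP}^2\sharp\overline{\mathbf{CP}}^2$. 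Remark~\ref{remark:2.4} then yields $\mathsf{U}(2)$-invariance.

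On $\mathbf{CP}^2\sharp\overline{\mathbf{CP}}^2$ this needs the same direction to collapse at both ends. The paper arranges it by replacing $\mathsf{K}_+$ with a conjugate equal to $\mathsf{K}_-$. Treat that step with care in your own write-up. The angle between $\mathfrak{k}_-$ and $\mathfrak{k}_+$ read off along a normal geodesic is preserved by equivariant isometries. When the two are orthogonal (exactly how the product metric sits under the diagonal $\mathsf{SO}(3)$ on $S^2\times S^2$), no two $f_i$ can coincide, so there you must prove non-existence rather than $a\equiv b$.

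The remaining reductions are fine in outline:
\begin{itemize}
\item Bochner for $\lambda\le 0$;
\item the B\'erard-Bergery--Derdzi\'nski cases;
\item ruling out $T^3$, e.g.\ because $\pi_1(M)$ would be $\mathbf{Z}^3$ modulo the classes of two circle isotropy groups, hence infinite;
\item diagonality, which is a cited theorem rather than a formal consequence of the mixed Ricci equations.
\end{itemize}
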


\vspace{0.2cm}

\noindent In particular, $S^4$ is the first even-dimensional sphere of dimension $n \geq 4$ for which the round metric is the only cohomogeneity-one Einstein metric, up to scaling (cf. \cite{Bohm1998InhomogeneousEinstein, FoscoloHaskins2017, Chi2024, NienhausWink2025TenSphere, ButtsworthHodgkinson2026S12}). 

In the more general category of orbifold metrics, Hitchin \cite{Hitchin1996NewFamilyEinsteinMetrics} discovered, for each integer $k>0$, an $\mathsf{SO}(3)$-invariant cohomogeneity-one self-dual Einstein orbifold metric $\g_{\text{H},k}$ on an orbifold $\mathcal{O}_k$ homeomorphic to $S^4$ (cf. \cite{GroveWilkingZiller2008, Ziller2009GeometryCohomogeneityOne}).  In the final section of the paper, we prove the following result. 

\begin{theorem}\label{thm:orbifold}
For an integer $k \geq 1$, let $\g$ be an $\mathsf{SO}(3)$-invariant Einstein orbifold metric on the Hitchin orbifold \( \mathcal{O}_k\). Then g is homothetic to $\g_{\text{H},k}$.
\end{theorem}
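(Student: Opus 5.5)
The plan is to reduce to Hitchin's classification of $\mathsf{SO}(3)$-invariant self-dual Einstein metrics by showing that any $\mathsf{SO}(3)$-invariant Einstein orbifold metric on $\mathcal{O}_k$ must be self-dual (or anti-self-dual, after fixing orientation). The key input is that $\mathsf{SO}(3)$ acts on $\mathcal{O}_k$ with cohomogeneity one, principal orbits $\mathsf{SO}(3)/(\mathbf{Z}_2\times\mathbf{Z}_2)$, and two singular orbits that are $\mathbf{RP}^2$'s carrying orbifold cone angles determined by $k$. On the principal part the metric takes the diagonal form
\[
\g = dt^2 + a(t)^2\sigma_1^2 + b(t)^2\sigma_2^2 + c(t)^2\sigma_3^2 .
\]
Diagonality follows from the $\mathbf{Z}_2\times\mathbf{Z}_2$ isotropy, since it acts on $\mathfrak{so}(3)$ by inequivalent characters. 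The Einstein condition then becomes a system of second-order ODEs in $a,b,c$ with a first-order constraint.

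First, I would write out the curvature of this diagonal Bianchi IX metric. The Weyl tensor splits into $W^\pm$, and each of these is diagonal in the standard basis of $\Lambda^\pm$. Hitchin's equations for self-dual Einstein metrics (in Tod's Painlevé VI formulation) are a first-order system, and the full Einstein system is second order. The key step is therefore to show that the smoothness and orbifold boundary conditions at both singular orbits force $W^-\equiv 0$. For this I would use the Einstein second Bianchi identity $\delta W^- = 0$, which for diagonal Bianchi IX reduces to linear first-order ODEs for the eigenvalues of $W^-$, coupled with the metric. Alternatively, and more robustly, I would use the ODE analysis employed in this paper for Theorem~\ref{thm:Main-theorem}.

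Concretely, near each singular orbit the initial value problem has a finite-parameter family of solutions. Following Eschenburg--Wang, for this isotropy type the smooth or orbifold-regular Einstein metrics near an $\mathbf{RP}^2$ orbit form a family with one free parameter beyond scale. I would compute the first nontrivial Taylor coefficients and check that the self-dual solutions form a codimension-zero or codimension-one subfamily. The main obstacle is showing that a non-self-dual solution emanating from one singular orbit cannot close up with the correct cone angle $2\pi/(k-2)$-type condition at the other. My first approach would be to find a monotone or conserved quantity, analogous to the one in the B\'erard-Bergery--Derdzi\'nski argument, namely a first integral built from $\det W^-$ or $|W^-|^2$ weighted by powers of the volume $abc$. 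I would then show that it vanishes at one end only if it vanishes identically. A second option is a Weitzenböck/Bochner argument: compactness of the orbifold together with an integral formula such as the Gauss--Bonnet-type $\int |W^+|^2 - |W^-|^2 \propto$ the orbifold signature gives one relation, and the cohomogeneity-one structure supplies enough additional information to pin $W^-=0$.

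Once self-duality is established, Hitchin's classification applies. The $\mathsf{SO}(3)$-invariant self-dual Einstein metrics with $\lambda>0$ correspond to solutions of Painlevé VI, and the solution extending over the orbifold with the prescribed singular orbits and cone angles is unique up to scale and equals $\g_{\text{H},k}$. Positivity of $\lambda$ follows from Bochner's theorem applied to the Killing fields, since a compact Einstein orbifold with $\lambda\le 0$ and nontrivial isometric $\mathsf{SO}(3)$-action must be flat, which is impossible here because of the singular orbit structure. This yields that $\g$ is homothetic to $\g_{\text{H},k}$.
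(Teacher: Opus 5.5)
Your overall architecture matches the paper's: prove $W^-\equiv 0$, then invoke the classification of $\mathsf{SO}(3)$-invariant self-dual Einstein orbifolds (the paper cites Grove--Wilking--Ziller, Theorem 12.1). The gap is that the step carrying all the content, forcing $W^-\equiv 0$, is never supplied. You list four possible strategies without executing any, and three of them would not work as stated:
\begin{itemize}
\item The orbifold signature formula gives only one integral identity relating $\int|W^+|^2$ and $\int|W^-|^2$, with orbifold correction terms. Since $W^+\not\equiv 0$ for $\g_{\text{H},k}$ when $k\geq 2$, it cannot force $W^-\equiv 0$.
\item You do not exhibit a first integral built from $\det W^-$ or $|W^-|^2$. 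The B\'erard-Bergery--Derdzi\'nski arguments rely on a larger symmetry group, so there is no evident analogue here.
\item A local Eschenburg--Wang parameter count at each end cannot decide the global question, namely whether a non-self-dual solution leaving the smooth end closes up at the orbifold end.
\end{itemize}
Your first suggestion, that the Bianchi identity gives linear first-order ODEs for the eigenvalues on $\Lambda^2_-$, is exactly what the paper uses. But a linear system alone does not force vanishing. You need sign control on its coefficients, and that is the missing idea.

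The paper gets this sign control from the connection coefficients $B_i$ of $\Lambda^2_-$. These satisfy the closed quadratic system \eqref{eqn:Bi-systems-fix}, which has invariant cones.
\begin{enumerate}
\item At the orbifold end, with $\sigma_2$ collapsing, one computes $B_2=-\frac{k+2}{2s}+O(s)$ and $B_1,B_3=\frac{\beta-\Theta_k}{2q^2}s+O(s^2)$, where $\beta=\lambda q^2$ and $\Theta_k=4+8/k$.
\item The case $\beta<\Theta_k$ is excluded: $\{B_i\le 0\}$ is backward invariant, which contradicts $B_1=\frac{1}{2t}+O(t)$ at the smooth end.
\item The case $\beta=\Theta_k$ is excluded by Lemma~\ref{lem:regular-singular-germ}, with indicial eigenvalues $-(k+1)$ and $1$.
\item Hence the solution lies in $\{B_1\ge 0,\ B_2\le 0,\ B_3\ge 0\}$ on the whole interval.
\item With this sign pattern, the Bianchi system for $E_1=b_2-b_1$ and $E_3=b_2-b_3$ has nonpositive off-diagonal coefficients $B_2-B_1$ and $B_2-B_3$. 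So the cone $\{\varepsilon E_1\ge 0,\ \varepsilon E_3\ge 0\}$ with $\varepsilon=\operatorname{sign}(3\Theta_k-\beta)$ is backward invariant. The case $\beta=3\Theta_k$ is again handled by Lemma~\ref{lem:regular-singular-germ}.
\item At the smooth end, $E_3\to 0$ and $2B_1+B_2=\frac1t+O(1)$. An integrating-factor argument then gives $E_3\equiv 0$, and hence $E_1\equiv 0$.
\end{enumerate}
Without an argument of this kind, your proposal restates the hardest step rather than proving it.

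Minor point: in the paper's normalization only one $\mathbf{RP}^2$ is an orbifold stratum, with normal angle $2\pi/k$. The other singular orbit is smooth, and the case $k=1$ is Theorem~\ref{thm:SO(3)-S4}. Your ``$2\pi/(k-2)$'' uses Hitchin's original indexing.
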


\vspace{0.2cm}

\noindent In Section~\ref{sec:2}, we show that the proof of Theorem~\ref{thm:Main-theorem} reduces to the classification of cohomogeneity-one Einstein metrics with $\mathsf{G} = \mathsf{SU}(2)$ or $\mathsf{G} = \mathsf{SO}(3)$. On the union of the principal orbits, a $\mathsf{G}$-invariant Einstein metric is then described by three functions subject to boundary conditions. The key insight is to consider the induced system of evolution equations for the connection coefficients on the bundles of self-dual and anti-self-dual $2$-forms, and for the eigenvalues of the curvature operator on $\Lambda_{\pm}^2$. These evolution equations allow us to treat the $\mathsf{SO}(3)$-actions (see Sections~\ref{subsec:SO(3)-S4}, \ref{subsec:SO(3)-CP2}, and \ref{sec:next}). The \(\mathsf{SU}(2)\)-actions on \(S^4\) and \(\mathbf{CP}^2\) are handled by a maximum principle argument (see Sections~\ref{subsec:SU(2)-S4} and \ref{subsec:SU(2)-CP2}). The remaining $\mathsf{SU}(2)$-action on $\mathbf{CP}^2 \sharp \overline{\mathbf{CP}}^2$ is also handled by a maximum principle argument after changing the group diagram presentation (see Section~\ref{sec:Trapping}).

We close the introduction by mentioning that significant progress in the classification of Einstein metrics has come from imposing additional curvature constraints (see, e.g., \cite{GurskyLeBrun1999, Tachibana1974PositiveCurvatureOperator, Brendle2010, CaoTran2018, CaoTran2022FourManifolds, VerdianiZiller2021, PetersenWink2022, Liu2025ToricEinstein4Manifolds}).

\subsection*{Acknowledgments}
I am indebted to Timothy Buttsworth. After I proved Theorem~\ref{thm:SU(2)-CP2}, he encouraged me to look for further results, which ultimately led to the classification theorem proved in this paper. I owe to Patrick Donovan the idea of using the K\"ahler condition to handle the \(\mathsf{SO}(3)\)-actions on \(\mathbf{CP}^2\) and \(S^2 \times S^2\). He also suggested the formulation of Theorem~\ref{thm:Main-theorem}. The proof of Theorem~\ref{thm:SU(2)-CP2-blow-up} was drastically simplified following suggestions from Wolfgang Ziller. Ziller also encouraged me to prove Theorem~\ref{thm:orbifold}. I also thank Christoph B\"ohm, Timothy Buttsworth, Peter Petersen, Matthias Wink, and Wolfgang Ziller for very detailed comments on earlier versions of the manuscript. 
 
\section{\textbf{Preliminaries}}\label{sec:2}

\noindent We first reduce Theorem~\ref{thm:Main-theorem} to the classification of cohomogeneity-one Einstein manifolds. The assumption on the isometry group implies that $\lambda \geq 0$, and if $\lambda=0$, the metric is flat (see, e.g., \cite[Section 8.2]{petersen2016riemannian}). So assume $\lambda>0$. Since $\pi_1(M)$ is finite, after passing to the universal cover, we may assume that $M$ is simply connected. Write $\mathsf{G} := \mathsf{Isom}_0(M,\g)$ for the identity component of the isometry group. From \cite{BerardBergery1982NouvellesEinstein, Derdzinski1983SelfDualKahler}, we may assume $\dim \mathsf{G} =3$. Let $\mathsf{G}/\mathsf{H}$ be a principal orbit. Since $\mathsf{H}$ is principal, the slice representation is trivial. Thus, the induced $\mathsf{G}$-action on $\mathsf{G}/\mathsf{H}$ is effective. In particular, if $d : = \dim \mathsf{G}/\mathsf{H} \leq 2$, then $\mathsf{G}$ acts effectively and transitively on a compact $d$-dimensional manifold. So $d=2$ and $\mathsf{G}/\mathsf{H}$ is a compact homogeneous surface with a three-dimensional connected isometry group. Hence, $\mathsf{G}=\mathsf{SO}(3)$ and $\mathsf{G}/\mathsf{H}$ is $S^2$ or $\mathbf{RP}^2$, which is again covered by \cite{BerardBergery1982NouvellesEinstein, Derdzinski1983SelfDualKahler}.  It remains to consider the effective cohomogeneity-one actions with $\dim \mathsf{G}=3$. By the Parker--Hoelscher classification \cite{Parker1986, Hoelscher2010} (see also \cite{Donovan2025}), Theorem~\ref{thm:Main-theorem} is reduced to understanding the Einstein metrics invariant under the $\mathsf{SU}(2)$ actions on $S^4$, $\mathbf{CP}^2$, $\mathbf{CP}^2 \sharp \overline{\mathbf{CP}}^2$, and the standard $\mathsf{SO}(3)$-actions on $S^4$, $\mathbf{CP}^2$, and $S^2 \times S^2$. This will occupy Sections~\ref{sec:2}--\ref{sec:next}.

\begin{remark}
    Let us remark that if $(M,\g)$ is a closed simply connected Einstein $4$-manifold with $\dim \mathsf{Isom}(M,\g)>0$, then the same argument as before shows that $\lambda>0$ or $\lambda=0$, in which case, the metric is flat. If $\dim \mathsf{Isom}(M,\g) =2$, the metric has cohomogeneity two. The closed simply connected $4$-manifolds admitting an effective cohomogeneity-two action are classified by Orlik--Raymond \cite{OrlikRaymond1970TorusActionsI}. They are equivariantly diffeomorphic to connected sums of $\mathbf{CP}^2$, $\overline{\mathbf{CP}}^2$, $S^2 \times S^2$. From \cite{Liu2025ToricEinstein4Manifolds}, those known to admit invariant Einstein metrics are those listed in Theorem~\ref{thm:Main-theorem} in addition to the Chen--LeBrun--Weber metric \cite{ChenLeBrunWeber2008ConformallyKahlerEinstein} on $\mathbf{CP}^2 \sharp 2\overline{\mathbf{CP}}^2$ and the unique K\"ahler--Einstein metric  \cite{Tian1987KahlerEinsteinCertain, Tian1990CalabiConjectureSurfaces} on $\mathbf{CP}^2 \sharp 3 \overline{\mathbf{CP}}^2$.

    If $\dim \mathsf{Isom}(M,\g)=1$, then the manifold admits a cohomogeneity-three circle action. From \cite{Fintushel1978ClassificationCircleActions}, a locally smooth $\mathsf{S}^1$-action on a simply connected closed $4$-manifold is diffeomorphic to a connected sum of $S^4$, $\mathbf{CP}^2$, $\overline{\mathbf{CP}}^2$, or $S^2 \times S^2$. However, very little seems to be known about the Einstein metrics in this case (see, e.g., \cite{Anderson2010SurveyEinsteinMetrics, Seshadri2004EinsteinFourManifolds}).
\end{remark}

\subsection{Structure of cohomogeneity-one manifolds}
Following \cite{GroveZiller2011Lifting, BettiolKrishnan2019}, let $(M,\g)$ be a closed simply connected $4$-manifold. A group $\mathsf{G}$ acting isometrically on $(M,\g)$ is said to have \emph{cohomogeneity-one} if the orbit space $M/\mathsf{G}$ is diffeomorphic to $[0,T]$ (see, e.g., \cite{GroveZiller2011Lifting, Ziller2009GeometryCohomogeneityOne, BettiolKrishnan2019, Donovan2025}). If $\pi : M \to [0,T]$ denotes the quotient map, the \emph{principal orbits} are the codimension one orbits $\pi^{-1}(t)$ for $0<t<T$, and the \emph{singular orbits} are $\pi^{-1}(0)$ and $\pi^{-1}(T)$. 

With respect to the quotient metric, the quotient $M / \mathsf{G}$ is isometric to the interval $[0,T]$. Let $c : [0,T] \to M$ be a geodesic orthogonal to the orbits. For $t_0, t \in (0,T)$, the \emph{principal isotropy} $\mathsf{H} : = \mathsf{G}_{c(t_0)} \simeq \mathsf{G}_{c(t)}$. The two singular orbits have isotropy $\mathsf{K}_- := \mathsf{G}_{c(0)}$ and $\mathsf{K}_+ := \mathsf{G}_{c(T)}$, and $M$ is equivariantly diffeomorphic to \begin{eqnarray}\label{eqn:disk-bundle}
    \mathsf{G} \times_{\mathsf{K}_-} D^{\ell_- +1} \cup_{\mathsf{G}/\mathsf{H}} \mathsf{G} \times_{\mathsf{K}_+} D^{\ell_+ +1},
\end{eqnarray} where $S^{\ell_{\pm}} = \partial D^{\ell_{\pm} +1} \simeq \mathsf{K}_{\pm} / \mathsf{H}$. The groups $\mathsf{G}, \mathsf{H}, \mathsf{K}_{\pm}$ together with their embeddings $j_{\pm} : \mathsf{K}_{\pm} \to \mathsf{G}$ and $i_{\pm} : \mathsf{H} \to \mathsf{K}_{\pm}$ define a \emph{group diagram} that is denoted by $\mathsf{H} \subset \{ \mathsf{K}_-, \mathsf{K}_+\} \subset \mathsf{G}$. From \eqref{eqn:disk-bundle}, a group diagram determines $M$. Conversely, a diagram $\mathsf{H} \subset \{ \mathsf{K}_-, \mathsf{K}_+ \} \subset \mathsf{G}$, together with the embeddings and with $\mathsf{K}_{\pm} / \mathsf{H}$ homogeneous spheres whose actions extend over the disks $D^{\ell_{\pm}+1}$, defines a cohomogeneity-one $\mathsf{G}$-manifold by the above disk bundle construction. 

However, the equivariant diffeomorphism type is represented not by a unique diagram, but by the equivalence class generated by simultaneous conjugation in $\mathsf{G}$, by replacing $\mathsf{K}_+$ with $n \mathsf{K}_+ n^{-1}$ for $n \in N_{\mathsf{G}}(\mathsf{H})_0$, and, if the endpoints are unlabeled, by interchanging $\mathsf{K}_-$ and $\mathsf{K}_+$. That is, for $a \in \mathsf{G}$ and $n \in N_{\mathsf{G}}(\mathsf{H})_0$, the manifolds defined by the group diagrams $\mathsf{H} \subset \{ \mathsf{K}_-, \mathsf{K}_+ \} \subset \mathsf{G}$ and $a\mathsf{H}a^{-1} \subset \{ a \mathsf{K}_-a^{-1}, an\mathsf{K}_+ n^{-1} a^{-1} \} \subset \mathsf{G}$ are equivariantly diffeomorphic (see \cite{GroveWilkingZiller2008, GroveZiller2011Lifting}).

The closed simply connected $4$-manifolds admitting a cohomogeneity-one action are $S^4$, $\mathbf{CP}^2$, $S^2 \times S^2$, and $\mathbf{CP}^2 \sharp \overline{\mathbf{CP}}^2$. This classification was first obtained by Parker \cite{Parker1986} with the omission of the $\mathsf{SO}(3)$-action on $\mathbf{CP}^2$ and all of the $\mathsf{U}(2)$-actions. The omissions were corrected by Hoelscher \cite{Hoelscher2010}, who classified the compact connected almost effective non-reducible actions. The complete classification of the group diagrams is due to Grove--Ziller \cite{GroveZiller2011Lifting}.

\subsection{Cohomogeneity-one Einstein metrics}
Let $(M,\g)$ be a simply connected closed $4$-manifold. Let $\mathsf{G}$ be a fixed compact connected three-dimensional group, $\mathsf{G} = \mathsf{SU}(2)$ or $\mathsf{SO}(3)$, acting almost effectively and isometrically on $M$ with cohomogeneity-one. Let $(\sigma_1,\sigma_2,\sigma_3)$ be a fixed left-invariant coframe on $\mathsf{G}$, with \begin{eqnarray}\label{eqn:structure-equations-sigma}
    \text{d}\sigma_i &=& - 2 \sigma_j \wedge \sigma_k,
\end{eqnarray} where $(i,j,k)$ is cyclic.  Since $\mathsf{G}$ acts on $(M,\g)$ by isometries, the $\mathsf{G}$-invariant metric is determined by the one-parameter family of $\mathsf{H}$-invariant inner products $\g_t$ on $T_{c(t)}(\mathsf{G} \cdot c(t))$, together with the normal arc length coordinate $t$, where $c(t)$ is the geodesic meeting all orbits orthogonally. Further, it suffices to determine g on the open dense subset given by the union of the principal orbits $M^{\circ} := \bigcup_{t \in (0,T)} \pi^{-1}(t) \subset M$. On $M^{\circ}$, we can write $\g = \text{d} t^2 + \g_t$, where $\g_t$ is a one-parameter family of homogeneous metrics on $\mathsf{G}/\mathsf{H}$. Conversely, a metric of the form $\g = \text{d}t^2+\g_t$ defines a cohomogeneity-one metric on $M$; certain smoothness conditions must be satisfied at the endpoints $t=0$ and $t=T$.

Let $\{ X_i \}$ be a basis of the Lie algebra of $\mathsf{G}$, adapted to the inclusion $\mathsf{H} \subset \{ \mathsf{K}_-, \mathsf{K}_+\} \subset \mathsf{G}$ and orthonormal with respect to a fixed bi-invariant metric. Let $X_i^{\ast}(t) : = \frac{\text{d}}{\text{d}s} \exp(s X_i) \cdot c(t) \vert_{s=0}$. A metric is \emph{diagonal} if on $M^{\circ}$, it can be written in the form \begin{eqnarray}\label{eqn:metric-form}
    \g &=& \text{d}t^2 + f_1^2(t) \sigma_1^2 + f_2^2(t) \sigma_2^2 + f_3^2(t) \sigma_3^2, \qquad 0 < t < T.
\end{eqnarray} In this case, the $f_i(t)$ are the lengths of the Killing fields $X_i^{\ast}(t)$, and these Killing fields vanish at $t=0$ or $t=T$ if and only if $X_i$ belongs to the Lie algebra of the corresponding isotropy subgroup $\mathsf{K}_{\pm}$. In this case, the smoothness conditions translate into conditions on the Taylor series of $f_i(t)$ at $t=0$ and $t=T$ (see \cite{EschenburgWang2000InitialValue, GroveZiller2002, GroveVerdianiZiller2011, VerdianiZiller2022}).

\begin{lemma}\label{lem:diagonal}
Let g be a $\mathsf{G}$-invariant cohomogeneity-one Einstein metric. If $\mathsf{G} = \mathsf{SU}(2)$ or $\mathsf{G}=\mathsf{SO}(3)$, then g can be written in diagonal form. 
\end{lemma}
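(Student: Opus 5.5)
Since $\dim\mathsf{G}=3$ and the action has cohomogeneity one, the principal isotropy $\mathsf{H}$ is finite, so the principal orbits are $\mathsf{G}/\mathsf{H}$ with $\mathsf{H}$ discrete. On $M^{\circ}$ we write $\g = \text{d}t^2 + \g_t$, where $\g_t$ is a left-invariant metric on $\mathsf{G}$, i.e.\ an $\operatorname{Ad}(\mathsf{H})$-invariant inner product on $\mathfrak{g}=\mathfrak{su}(2)$. I use the Einstein condition to show that the family $\g_t$ can be diagonalised simultaneously by a single basis $\{X_i\}$ that is independent of $t$ and orthonormal for the bi-invariant metric.

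First I use the mixed Ricci components. For $\g=\text{d}t^2+\g_t$ with $\g_t$ viewed as a symmetric endomorphism $P(t)$ of $\mathfrak{g}$ relative to the bi-invariant metric $Q$, the Codazzi-type equation gives $\text{Ric}(\partial_t, X) = \delta L$ evaluated on $X$, where $L=\tfrac12 P^{-1}P'$ is the shape operator. For a three-dimensional unimodular group this is the familiar term $\operatorname{tr}(\operatorname{ad}_X \circ L)$. Einstein forces $\text{Ric}(\partial_t,\cdot)=0$. I then use Milnor's frame for $\mathfrak{su}(2)$: at any fixed $t_0$ there is a $Q$-orthonormal basis diagonalising $P(t_0)$, and the structure constants stay of the form $[X_i,X_j]=2X_k$ because $\mathsf{SO}(3)$ acts transitively on oriented orthonormal frames of $(\mathfrak{g},Q)$. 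In this frame the mixed condition says that $\sum_{j,k}\epsilon_{ijk}L_{jk}$-type expressions vanish. Written out, this is $(f_j^2 - f_k^2)\,(P^{-1}P')_{jk}=0$ for the off-diagonal entries, using the Milnor form at $t_0$.

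Second, I turn this into preservation of diagonality. Where the eigenvalues of $P(t_0)$ are distinct, the off-diagonal entries of $P'(t_0)$ in the eigenbasis must vanish, so the eigenvectors do not rotate to first order. I then set this up as an ODE argument. The Einstein equations are a second-order system $P''=F(P,P')$ that is $\operatorname{Ad}(\mathsf{SO}(3))$-equivariant, and the set of $(P,P')$ that are simultaneously diagonal in a fixed frame is invariant under the flow, because it is the fixed-point set of the Klein four-group of diagonal sign changes, which act by automorphisms. The constraint shows that the initial data $(P(t_0),P'(t_0))$ lie in this set. ODE uniqueness (real-analyticity of Einstein metrics also works) then shows that $P(t)$ remains diagonal in that frame for all $t\in(0,T)$.

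The main obstacle is the degenerate case where $P(t_0)$ has repeated eigenvalues at every $t_0$. There the constraint gives no information on the corresponding block. In that case I use the residual freedom: $P(t_0)$ is invariant under the $\mathsf{SO}(2)$ or $\mathsf{SO}(3)$ rotating the eigenspace, so I can rotate the frame within the eigenspace to diagonalise $P'(t_0)$ as well. The symmetric pair $(P(t_0),P'(t_0))$ then lies in the diagonal set, and the same uniqueness argument applies. If eigenvalues coincide only at isolated $t$, I choose $t_0$ generic. Finally I check that the frame can be taken adapted to $\mathsf{H}\subset\{\mathsf{K}_\pm\}$. The invariance of $\g_t$ under the finite group $\operatorname{Ad}(\mathsf{H})$ and the smoothness at the singular orbits force the Lie algebras of $\mathsf{K}_\pm$ (one-dimensional, or all of $\mathfrak{g}$) to be eigenlines of $P$ near the endpoints. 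By diagonality these are then coordinate axes, and the metric takes the form \eqref{eqn:metric-form}.
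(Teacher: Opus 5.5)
Your proof is correct, and it takes a genuinely different route from the paper.

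The paper does not prove the lemma directly. For the $\mathsf{SO}(3)$-action on $S^4$ it observes that $\mathsf{H}\simeq\mathbf{Z}_2\oplus\mathbf{Z}_2$ acts on $\mathfrak{so}(3)$ by three distinct characters, so every invariant metric, Einstein or not, is diagonal. For the other actions it cites Donovan (the $\mathsf{SU}(2)$-actions) and Dammerman (the $\mathsf{SO}(3)$-actions on $\mathbf{CP}^2$ and $S^2\times S^2$).

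You instead give one uniform argument with three ingredients:
\begin{itemize}
\item The mixed equation $\mathrm{Ric}(\partial_t,X)=\operatorname{tr}(\operatorname{ad}_X\circ L)=0$, written in a Milnor frame at a single $t_0$, gives $(f_j^2-f_k^2)P'_{jk}(t_0)=0$.
\item Rotations inside eigenspaces of $P(t_0)$ lie in $\mathsf{SO}(3)=\operatorname{Aut}(\mathfrak{su}(2))\cap\mathsf{O}(Q)$ and fix $P(t_0)$. Hence $(P(t_0),P'(t_0))$ can always be made simultaneously diagonal.
\item The Klein four-group of sign changes acts by automorphisms, and its fixed locus is exactly the diagonal pairs. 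So diagonality propagates along the $\operatorname{Ad}$-equivariant second-order ODE by uniqueness.
\end{itemize}

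What your route buys: a self-contained proof showing that diagonality is a purely local fact on the regular part, valid for any finite $\mathsf{H}$. The endpoints enter only to adapt the frame to $\mathfrak{k}_\pm$. It also makes visible why $\mathfrak{su}(2)$ is special: all Milnor constants $2R_i$ are nonzero, and two coincide exactly when the corresponding $f_i$ do, in which case the residual rotation is an automorphism. What the paper's route buys: it is shorter, and for $S^4$ it gives the stronger statement that no Einstein condition is needed.

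Two small corrections.

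First, the case split on $t_0$ (generic versus everywhere-degenerate) is unnecessary. The constraint kills the entries of $P'(t_0)$ between distinct eigenspaces, and the residual rotation diagonalizes the rest, at every $t_0$.

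Second, in the last step, ``$\mathfrak{k}_\pm$ is an eigenline of a diagonal $P$'' does not by itself give ``coordinate axis'' when two $f_i$ coincide. Argue directly instead. If $Y=\sum_i Y_iX_i\in\mathfrak{k}_-$, then $\sum_i f_i(t)^2Y_i^2=|Y^*(c(t))|^2\to 0$. So $f_i(0)=0$, hence $X_i^*$ vanishes at $c(0)$ and $X_i\in\mathfrak{k}_-$, whenever $Y_i\neq 0$. Since $\dim\mathfrak{k}_-=1$ unless $\mathsf{K}_-=\mathsf{G}$, $Y$ is a multiple of a single $X_i$. The same argument applies to $\mathfrak{k}_+$.
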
 \begin{proof}
    For the $\mathsf{SO}(3)$-action on $S^4$, the principal isotropy is $\mathsf{H} \simeq \mathbf{Z}_2 \oplus \mathbf{Z}_2$, and $\mathfrak{so}(3)$ splits into three distinct one-dimensional characters of $\mathsf{H}$. In particular, every $\mathsf{SO}(3)$-invariant metric is diagonal. For the $\mathsf{SU}(2)$-actions, the invariant Einstein metrics are diagonal \cite[Theorem 3.1]{Donovan2025} (see also \cite{Buttsworth2025SU2SteadyRicciSolitons}). For the $\mathsf{SO}(3)$-actions on $\mathbf{CP}^2$ and $S^2 \times S^2$, the Einstein metrics are diagonal \cite{Dammerman2009}. 
\end{proof}

\begin{remark}\label{remark:2.4}
Let g be an invariant metric of the form \eqref{eqn:metric-form}. If $f_i \equiv f_j$, then the pulled-back metric on $(0,T) \times \mathsf{G}$ is invariant under the right action of $\eta_k : = \exp(\mathbf{R} X_k)$, since $\text{Ad}(\eta_k)$ rotates $\text{span} \{ X_i, X_j \}$ and fixes $X_k$. The right action descends to $(0, T) \times \mathsf{G}/\mathsf{H}$ if and only if $\eta_k \subset \mathsf{N}_{\mathsf{G}}(\mathsf{H})$, and it extends across the singular orbits if and only if, for the chosen representative of the group diagram, it preserves $\mathsf{K}_{\pm}$. For the $\mathsf{SU}(2)$-actions considered below, this implies that an $\mathsf{SU}(2)$-invariant Einstein metric with $f_i \equiv f_j$ is $\mathsf{U}(2)$-invariant (see, e.g., \cite[Remark 3.2]{Donovan2025}). Such metrics are classified \cite{BerardBergery1982NouvellesEinstein, Derdzinski1983SelfDualKahler, PetersenZhu1995}.
\end{remark}

\subsection{The Einstein equations}
If we set \begin{eqnarray}\label{eqn:notation-Ri-Ric-endomorphism}
    L_i & :=& \frac{f_i'}{f_i}, \qquad R_i : = \frac{f_i}{f_j f_k} \qquad (i,j,k) \ \text{cyclic},
\end{eqnarray} then the mean curvature $S$ of the principal orbit is given by $S = L_1+L_2+L_3$, and the Einstein equation $\text{Ric}(\g)=\lambda \g$ is then the following system of equations, \begin{eqnarray}
     L_i' &=&  -SL_i  + 2R_i^2 -2(R_j-R_k)^2 - \lambda, \label{eqn:L-derivative} \\
     R_i' &=& R_i(L_i - L_j - L_k), \label{eqn:R-derivative}
\end{eqnarray} together with \begin{eqnarray}\label{eqn:lambda-constraint}
    \lambda &=& - \sum_i R_i^2 + 2 \sum_{i<j} R_i R_j - \sum_{i<j} L_i L_j.
\end{eqnarray} 

\noindent The condition $f_i \equiv f_j$ is invariant under the Einstein system of equations and under homothetic rescaling of the metric. Hence, $u_{ij}: = \log(f_i/f_j)$ is the natural scale-invariant coordinate (cf. \cite{Bohm1998InhomogeneousEinstein}). Then \begin{eqnarray*}\label{eqn:alg-eqns-for-u}
    u_{ij}' \ = \ L_i - L_j, \qquad u_{ij} \ = \ -u_{ji}, \qquad u_{ij}+u_{jk} + u_{ki} \ = \ 0,
\end{eqnarray*} and hence, \begin{eqnarray}\label{eqn:main-eqns-for-u}
    u_{ij}'' + S u_{ij}' &=&  4 \frac{(f_i^2 - f_j^2)(f_i^2+f_j^2-f_k^2)}{f_i^2 f_j^2 f_k^2}.
\end{eqnarray}  
We will use the standard smoothness conditions for invariant cohomogeneity-one metrics near a singular orbit. These are treated in \cite{EschenburgWang2000InitialValue, VerdianiZiller2022, Donovan2025}.

\subsection{Self-dual and anti-self-dual $2$-forms}
Consider the g-orthonormal frame $e^0 : = \text{d}t$, $e^i := f_i \sigma_i$. Then \eqref{eqn:structure-equations-sigma} implies \begin{eqnarray}\label{eqn:basic-cartan}
    \text{d}e^i &=& \frac{f_i'}{f_i} e^0 \wedge e^i - \frac{2f_i}{f_j f_k} e^j \wedge e^k \ = \ L_i e^0 \wedge e^i - 2 R_i e^j \wedge e^k.
\end{eqnarray} If $\omega_{ab}$ denote the Levi-Civita connection forms, then $\text{d}e^a = - \omega_{ab} \wedge e^b$, and hence, \begin{eqnarray*}
    \omega_{0i} \ = \ - L_i e^i, \qquad \omega_{jk} \ = \ (R_i - R_j - R_k) e^i.
\end{eqnarray*} The Levi-Civita connection induces a connection on the bundle of self-dual $2$-forms $\Lambda_+^2$ and the bundle of anti-self-dual $2$-forms $\Lambda_-^2$. The standard local frames for the rank $3$ bundles $\Lambda_{\pm}^2$ are given by $$\Omega_i^{\pm} \ = \ e^0 \wedge e^i \pm e^j \wedge e^k,$$ for cyclic $(i,j,k)$. The connection forms $\alpha_i^+ = - A_i e^i$ on $\Lambda_+^2$ are given by \begin{eqnarray*}
    \alpha_i^+ \ = \ -A_i e^i &=& \omega_{0i} + \omega_{jk} \ = \ - L_i e^i + (R_i -R_j - R_k) e^i,
\end{eqnarray*} and therefore, \eqref{eqn:notation-Ri-Ric-endomorphism} yields \begin{eqnarray}\label{eqn:SD-coefficieints}
    A_i & = & L_i + R_j + R_k - R_i \ = \ \frac{f_i'}{f_i} + \frac{f_j^2+f_k^2-f_i^2}{f_i f_j f_k}.  
\end{eqnarray} Similarly, the connection forms $\alpha_i^- = - B_i e^i$ on $\Lambda_-^2$ are given by \begin{eqnarray*}
    \alpha_i^- \ = \ - B_i e^i &=& \omega_{0i} - \omega_{jk} \ = \ - L_i e^i - (R_i - R_j - R_k) e^i,
\end{eqnarray*} and therefore, \eqref{eqn:notation-Ri-Ric-endomorphism} yields  \begin{eqnarray}\label{eqn:ASD-coefficients}
    B_i & = & L_i - R_j - R_k + R_i \ = \ \frac{f_i'}{f_i} + \frac{f_i^2 - f_j^2-f_k^2}{ f_i f_j f_k}.
\end{eqnarray} Then for cyclic $(i,j,k)$, \begin{eqnarray}\label{eqn:d-Omega}
    \text{d}\Omega_i^{+} &=& (A_j + A_k) e^0 \wedge e^j \wedge e^k, \qquad \text{d} \Omega_i^- \ = \ -(B_j + B_k) e^0 \wedge e^j \wedge e^k.
\end{eqnarray} Moreover, \begin{eqnarray}
\nabla \Omega_i^+ &=& - A_j e^j  \otimes \Omega_k^+ - A_k e^k  \otimes \Omega_j^+, \label{eqn:d-Omega+} \\
 \nabla \Omega_i^- &=& -B_k e^k \otimes \Omega_j^- - B_j e^j  \otimes \Omega_k^-. \label{eqn:d-Omega-}
\end{eqnarray} 

\noindent We will make use of the following in Theorem~\ref{thm:SO(3)-CP2} and Theorem~\ref{thm:SO(3)-S2xS2}.

\begin{lemma}\label{lem:SD-Kahler}
    Let $(M^4,\g)$ be an oriented connected Riemannian $4$-manifold with $\Omega \in \Omega^2_{\pm}(M)$ a non-zero self-dual or anti-self-dual $2$-form. If $\Omega$ is parallel with respect to the Levi-Civita connection of g, then g is a K\"ahler metric. 
\end{lemma}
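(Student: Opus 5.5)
Since $\Omega$ is parallel, its pointwise norm is constant: for any vector field $X$, $X|\Omega|^2 = 2\langle \nabla_X \Omega, \Omega\rangle = 0$. Because $M$ is connected and $\Omega \neq 0$, $|\Omega|$ is a nonzero constant. After multiplying by a constant I would normalise so that $|\Omega|^2 = 2$, which is the norm of a Kähler form in dimension four under the standard convention. Let $J$ be the endomorphism metrically dual to $\Omega$, defined by $\g(JX,Y) = \Omega(X,Y)$. Then $J$ is skew-adjoint, and it is parallel because $\g$ is parallel and $\Omega$ is parallel.

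The main step is to show that $J^2 = -\mathrm{Id}$. This is pointwise linear algebra and uses the self-duality. At a point, choose an oriented orthonormal frame $e^0,\dots,e^3$ and expand $\Omega = \sum_i c_i \Omega_i^{\pm}$ in the frame $\Omega_i^{\pm} = e^0\wedge e^i \pm e^j\wedge e^k$. The endomorphisms $J_i$ dual to the $\Omega_i^{\pm}$ satisfy the quaternion relations $J_i^2 = -\mathrm{Id}$ and $J_iJ_j = \pm J_k$ for cyclic $(i,j,k)$, so $J_iJ_j + J_jJ_i = 0$ for $i \neq j$. It follows that $J^2 = -(\sum_i c_i^2)\,\mathrm{Id}$. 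With $|\Omega_i^{\pm}|^2 = 2$, the normalisation $|\Omega|^2 = 2$ gives $\sum_i c_i^2 = 1$, so $J^2 = -\mathrm{Id}$.

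Hence $J$ is an almost complex structure. It is orthogonal, since $\g(JX,JY) = -\g(X,J^2Y) = \g(X,Y)$. It is parallel, so the Nijenhuis tensor, which can be written in terms of $\nabla J$, vanishes and $J$ is integrable. Its fundamental form is $\Omega$, which is closed because it is parallel. Therefore $(\g,J)$ is Kähler. The only point needing care is the sign and normalisation convention in the quaternion relations; the argument is otherwise routine.
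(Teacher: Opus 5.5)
Your proposal is correct and follows essentially the same route as the paper. Both arguments dualize $\Omega$ to a parallel skew-adjoint endomorphism and use the anticommuting quaternionic triple dual to the standard frame of $\Lambda^2_\pm$ to get $J^2 = -\tfrac{|\Omega|^2}{2}\,\mathrm{Id}$; they then normalise using the constancy of $|\Omega|$ and deduce orthogonality, integrability and the K\"ahler property from parallelism.
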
 \begin{proof}
    Let $\Omega \in \Omega_{\pm}^2(M)$ and define $\mathcal{Q} \in \text{End}(TM)$ by $\g(\mathcal{Q}u,v) = \Omega(u,v)$. If $\Omega$ is parallel, then since $\nabla \g =0$, the endomorphism $\mathcal{Q}$ is parallel. Choose a local oriented orthonormal coframe $e^1, ..., e^4$. A self-dual $2$-form can be written as $$\Omega = a(e^{12} + e^{34}) + b(e^{13}-e^{24})+c(e^{14}+e^{23}), \qquad e^{ij} : = e^i \wedge e^j.$$ The three corresponding endomorphisms form a quaternionic triple $\text{I}, \text{J}, \text{K}$: each squares to $-\text{Id}$, and the three are pairwise anti-commutative. Hence, $\mathcal{Q} = a \text{I} + b \text{J} + c \text{K}$ satisfies $\mathcal{Q}^2 = - (a^2+b^2+c^2)\text{Id}$. Since $| \Omega |^2 = 2(a^2+b^2+c^2)$, it follows that $\mathcal{Q}^2 = - \frac{| \Omega |^2}{2} \text{Id}$. The anti-self-dual case is the same, using the basis $e^{12}-e^{34}$, $e^{13}+e^{24}$, $e^{14}-e^{23}$. Since $\Omega \neq 0$ and $\nabla \Omega =0$, it has constant norm. Set $\text{J}_{\Omega} : = \frac{\sqrt{2}}{| \Omega|} \mathcal{Q}$. Then $\text{J}_{\Omega}^2 = - \text{Id}$, and $\text{J}_{\Omega}$ is skew-symmetric with respect to g. Hence, $\g(\text{J}_{\Omega}u, \text{J}_{\Omega}v) =  \g(u,v)$, and so $\text{J}_{\Omega}$ is a g-orthogonal almost complex structure. Since $\nabla \text{J}_{\Omega}=0$, this almost complex structure is integrable, and g is a K\"ahler metric.
\end{proof}

\noindent The coefficients $A_i$ and $B_i$ satisfy \begin{eqnarray}\label{eqn:Ai+Bi-systems}
    A_i' &=& (R_j + R_k - 3R_i) A_i - A_i^2 + A_j A_k, \\
    B_i' &=& (3R_i - R_j - R_k)B_i - B_i^2 + B_j B_k. \label{eqn:Bi-systems-fix}
\end{eqnarray} Let $\mathfrak{R} : \Lambda^2 \to \Lambda^2$ be the curvature operator. In dimension 4, the curvature operator has the standard decomposition \begin{eqnarray*}
    \mathfrak{R} &=& \begin{pmatrix}
        W^+ + \frac{s}{12} \text{Id}_{\Lambda_+^2} & \text{Ric}_{\circ}^{\sharp} \\
        \text{Ric}_{\circ}^{\sharp} & W^- + \frac{s}{12} \text{Id}_{\Lambda_-^2}
    \end{pmatrix}.
\end{eqnarray*} The Einstein condition implies $\text{Ric}_{\circ} =0$, and hence, $\mathfrak{R}(\Lambda_+^2) \subseteq \Lambda_+^2$ and $\mathfrak{R}(\Lambda_-^2) \subseteq \Lambda_-^2$. Write $F_i^+:=\mathfrak{R}(\Omega_i^+)$ for the curvature of $\Lambda_+^2$. Then  \begin{eqnarray*}
    F_i^+ \ = \ \text{d}\alpha_i^+ - \alpha_j^+ \wedge \alpha_k^+ &=& \text{d}(-A_i e^i) - (-A_j e^j) \wedge (-A_k e^k) \\
    &=& - A_i' e^0 \wedge e^i- A_i \text{d}e^i - A_j A_k e^j \wedge e^k \\
    &\overset{\eqref{eqn:basic-cartan}}{=}& (- A_i' - A_i L_i) e^0 \wedge e^i + (2 A_i R_i - A_j A_k) e^j \wedge e^k \\
    & \overset{\eqref{eqn:SD-coefficieints} \& \eqref{eqn:Ai+Bi-systems}}{=} & (2R_i A_i - A_j A_k) (e^0 \wedge e^i + e^j \wedge e^k).
\end{eqnarray*} Similarly, writing $F_i^- : = \mathfrak{R}(\Omega_i^-)$ for the curvature of $\Lambda_-^2$, then \begin{eqnarray*}
    F_i^- \ = \ \text{d} \alpha_i^- + \alpha_j^- \wedge \alpha_k^- &=& \text{d}(-B_i e^i) + (-B_j e^j) \wedge (-B_k e^k) \\
    &=& (- B_i' - B_i L_i) e^0 \wedge e^i + (2 B_i R_i + B_j B_k) e^j \wedge e^k \\
    &\overset{\eqref{eqn:ASD-coefficients} \& \eqref{eqn:Bi-systems-fix}}{=} & (-2R_i B_i - B_j B_k) (e^0 \wedge e^i - e^j \wedge e^k).
\end{eqnarray*} We write $F_i^+ = \mathfrak{R}(\Omega_i^+) = a_i \Omega_i^+$ and $F_i^- = \mathfrak{R}(\Omega_i^-) = b_i \Omega_i^-$, where \begin{eqnarray}\label{def-ai-bi-eigenvalues}
    a_i \ := \ 2R_i A_i - A_j A_k, \qquad b_i = -2R_i B_i - B_j B_k.
\end{eqnarray}  The Bianchi identity then yields \begin{eqnarray*}
    0 &=& \text{d}F_i^+ - \alpha_j^+ \wedge F_k^+ + \alpha_k^+ \wedge F_j^+ \\
    &=& \text{d}(a_i \Omega_i^+) + A_j e^j \wedge a_k \Omega_k^+ - A_k e^k \wedge a_j \Omega_j^+ \\
    &\overset{\eqref{eqn:d-Omega}}{=} & (a_i' + a_i (A_j + A_k)- a_k A_j - a_j A_k) e^0 \wedge e^j \wedge e^k \\
    &=& (a_i' + A_j(a_i -a_k) + A_k(a_i-a_j)) e^0 \wedge e^j \wedge e^k.
\end{eqnarray*} Similarly, \begin{eqnarray*}
    0 &=& \text{d}F_i^- + \alpha_j^- \wedge F_k^- - \alpha_k^- \wedge F_j^- \\
    &=& \text{d}(b_i \Omega_i^-) - B_je^j \wedge (b_k \Omega_k^-) + B_k e^k \wedge b_j \Omega_j^- \\
    & \overset{\eqref{eqn:d-Omega}}{=} & (-b_i'-b_i(B_j +B_k) + b_kB_j + b_j B_k) e^0 \wedge e^j \wedge e^k.
\end{eqnarray*} Hence, the eigenvalues of $\mathfrak{R} \vert_{\Lambda_{\pm}^2}$ satisfy the system of equations \begin{eqnarray}
    a_i' &=& - A_j(a_i - a_k) - A_k(a_i-a_j), \label{eqn:ai-ode} \\
    b_i' &=& -B_j(b_i-b_k) - B_k(b_i-b_j). \label{eqn:bi-ode}
\end{eqnarray} Moreover, since $\text{Ric}(\g) = \lambda \g$, each block has trace $\text{tr}(\mathcal{W}^{\pm} + \frac{s}{12}\text{Id}_{\Lambda_{\pm}^2})= \frac{3s}{12} = \lambda$. Hence, \begin{eqnarray*}
    \sum_i a_i \ = \ \sum_i b_i \ = \ \lambda.
\end{eqnarray*}

\noindent We will make repeated use of the following basic result (see, e.g., \cite{Bohm1998InhomogeneousEinstein}).

\begin{lemma}\label{lem:regular-singular-germ}
Let $\textbf{X}(t)$ be a smooth $\mathbf{R}^m$-valued germ at $t=0$, satisfying \begin{eqnarray}\label{eqn:ODE-FORM}
    t \textbf{X}'(t) &=& \mathcal{Q} \textbf{X}(t) + t \textbf{B}(t) \textbf{X}(t), 
\end{eqnarray} on $0 < t< \varepsilon$, where the coefficients of the \emph{indicial matrix} $\mathcal{Q}$ are constant and the coefficients of $\textbf{B}(t)$ are bounded. If $\textbf{X}(t) = t^N v + O(t^{N+1})$, with $v \neq 0$ the first non-zero Taylor coefficient of $\textbf{X}(t)$, then $\mathcal{Q}v = N v$. If $\textbf{X}^{(j)}(0)=0$ for all $j \geq 0$, then $\textbf{X}(t) \equiv 0$ for all $t>0$ sufficiently small.
\end{lemma}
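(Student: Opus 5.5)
The first claim is a comparison of lowest-order terms in the Taylor expansion; the second is a Gronwall-type uniqueness argument for the singular system \eqref{eqn:ODE-FORM}.

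\emph{First claim.} Write $\textbf{X}(t) = t^N v + O(t^{N+1})$. Since $\textbf{X}$ is smooth, we may differentiate the expansion term by term to get $t\textbf{X}'(t) = N t^N v + O(t^{N+1})$. On the right-hand side of \eqref{eqn:ODE-FORM}, $\mathcal{Q}\textbf{X}(t) = t^N \mathcal{Q}v + O(t^{N+1})$. Because the coefficients of $\textbf{B}$ are bounded, $t\textbf{B}(t)\textbf{X}(t) = O(t^{N+1})$. Equating the two sides, dividing by $t^N$ and letting $t \to 0^+$ gives $\mathcal{Q}v = Nv$.

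\emph{Second claim.} Assume $\textbf{X}$ is flat at $0$, so that $|\textbf{X}(t)| = O(t^m)$ for every $m$. Choose $m$ larger than the operator norm $\|\mathcal{Q}\| + \sup|\textbf{B}| + 1$. Set $\textbf{Y}(t) := t^{-m}\textbf{X}(t)$. Then $\textbf{Y}$ is bounded, still flat, and tends to $0$ as $t \to 0^+$. It satisfies
\[ t\textbf{Y}' = (\mathcal{Q} - m\,\mathrm{Id})\textbf{Y} + t\textbf{B}\textbf{Y}. \]
Now consider $\phi := |\textbf{Y}|^2$. We compute
\[ t\phi' = 2\langle (\mathcal{Q} - m + t\textbf{B})\textbf{Y}, \textbf{Y}\rangle \le 2(\|\mathcal{Q}\| + \varepsilon\sup|\textbf{B}| - m)\phi < 0 \]
wherever $\phi > 0$, for $t < \varepsilon \le 1$. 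Thus $\phi$ is nonincreasing on $(0,\varepsilon)$. Since $\phi(t) \to 0$ as $t \to 0^+$ and $\phi \ge 0$, we get $\phi \equiv 0$. Hence $\textbf{X} \equiv 0$ on $(0,\varepsilon)$.

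An alternative is to apply Gronwall's inequality with the weight $t^{-m}$ directly to $|\textbf{X}|$.

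The only delicate point is choosing $m$ large enough to make the drift term $-m\,\mathrm{Id}$ dominate $\mathcal{Q}$ and $t\textbf{B}$. Since flatness supplies every exponent $m$, this is not a genuine obstacle. One must also confirm that the boundedness of $\textbf{B}$ is uniform on $(0,\varepsilon)$, which is exactly what the hypothesis provides.
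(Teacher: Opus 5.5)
Your proposal is correct and follows essentially the same route as the paper. The first claim is the same comparison of $t^N$ coefficients. For the second, the paper's Gronwall step amounts to showing that $t^{-2\|\mathcal{Q}\|}e^{-2t\sup\|\textbf{B}\|}|\textbf{X}|^2$ is nonincreasing, while you absorb the $t\textbf{B}$ term into a larger exponent so that $t^{-2m}|\textbf{X}|^2$ is nonincreasing; in both versions, flatness of $\textbf{X}$ at $0$ forces the monotone quantity to vanish.
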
  \begin{proof}
Let \(N\) be the order of vanishing of \(\mathbf X(t)\) at \(0\), so that $\mathbf X(t)=t^Nv+O(t^{N+1})$ for $v \neq 0$. Then $t\mathbf X'(t)=Nt^Nv+O(t^{N+1})$. Since $\textbf{B}(t)$ is bounded, $\mathcal{Q} \textbf{X}(t)+t\textbf{B}(t) \textbf{X}(t) = t^N \mathcal{Q} v + O(t^{N+1})$. Comparing the coefficients of $t^N$ implies $\mathcal{Q}v = Nv$. Fix $0 < \delta < \varepsilon$.  Then for $0 < t < \delta$, \begin{eqnarray*}
    \frac{\text{d}}{\text{d}t} | \textbf{X}(t) |^2 &=& 2 \left \langle \textbf{X}(t), \textbf{X}'(t) \right \rangle \ \leq \ \left( \frac{2 \| \mathcal{Q} \|}{t}+2\sup_{0 < t < \delta} \| \textbf{B}(t) \| \right) | \mathbf{X}(t) |^2.
\end{eqnarray*} Gronwall's inequality on $[t,\delta]$ gives \begin{eqnarray*}
    |\mathbf X(\delta)|^2 & \leq & |\mathbf X(t)|^2\left(\frac{\delta}{t}\right)^{2q}e^{2\delta \sup_{0 < t < \delta} \| \textbf{B}(t) \|}.
\end{eqnarray*} Since $\textbf{X}^{(j)}(0)=0$ for all $j \geq 0$, for every $k$ we have $| \textbf{X}(t) | = O(t^k)$. Taking $k>q$ and letting $t \to 0^+$ in the preceding estimate implies $| \textbf{X}(\delta) |=0$. Hence, $\textbf{X}(t) \equiv 0$ for all $t>0$ sufficiently small.
\end{proof}

\section{\textbf{The $\mathsf{SU}(2)$ and $\mathsf{SO}(3)$-actions on $S^4$}}\label{sec:3}
\noindent There are three cohomogeneity-one actions on $S^4$. However, the $(\mathsf{SO}(3) \times \mathsf{SO}(2))$-invariant Einstein metrics are homothetic to the round metric \cite{BerardBergery1982NouvellesEinstein}. Hence, we need only consider the $\mathsf{SU}(2)$ and $\mathsf{SO}(3)$-actions.

\subsection{The $\mathsf{SU}(2)$-action on $S^4$}\label{subsec:SU(2)-S4}
Identify $S^4 =\{ (q,t) \in \mathbf{H} \oplus \mathbf{R} : | q|^2 + t^2 =1 \}$ and $\mathsf{SU}(2) \simeq \mathsf{Sp}(1)$. The suspension action is $g \cdot (q,t) = (gq,t)$. For $q \neq 0$, the stabilizer is trivial, so the principal isotropy is $\mathsf{H} = \{ e \}$, and the principal orbits are copies of $\mathsf{SU}(2) / \{ e \} \simeq \mathbf{S}^3$. The singular isotropy groups are $\mathsf{K}_+ = \mathsf{K}_- = \mathsf{SU}(2)$.

 \begin{theorem}\label{thm:SU(2)-S4}
An $\mathsf{SU}(2)$-invariant Einstein metric on $S^4$ is round. 
\end{theorem}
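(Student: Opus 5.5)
By Lemma~\ref{lem:diagonal} we may write $\g = \text{d}t^2 + f_1^2\sigma_1^2 + f_2^2\sigma_2^2 + f_3^2\sigma_3^2$ on $(0,T)\times \mathsf{SU}(2)$. At both singular orbits $\mathsf{K}_\pm = \mathsf{SU}(2)$ acts transitively on the normal sphere $S^3$. The smoothness conditions therefore force the metric to close up like a cone over the round $S^3$ at each end. Concretely, near $t=0$,
\begin{eqnarray*}
f_i(t) = t + O(t^3), \qquad f_i \ \text{odd in } t,
\end{eqnarray*}
with the same expansion in $T-t$ near $t=T$. In particular every $u_{ij} = \log(f_i/f_j)$ extends smoothly to $[0,T]$ and vanishes at both endpoints. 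The aim is to show $u_{ij}\equiv 0$. Then $f_1=f_2=f_3$, the metric is $\mathsf{U}(2)$-invariant (indeed $\mathsf{SO}(4)$-invariant), and by Remark~\ref{remark:2.4} together with \cite{BerardBergery1982NouvellesEinstein} it is round. Alternatively, the ODE \eqref{eqn:L-derivative} with $f_1=f_2=f_3=f$ reduces directly to $f = \lambda^{-1/2}\sin(\sqrt{\lambda}\,t)$, up to the normalisation of $\sigma_i$.

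The main step is a maximum principle for \eqref{eqn:main-eqns-for-u}. Order the functions pointwise and suppose, say, that $u_{12}$ has a positive interior maximum at $t_0\in(0,T)$. At that point $f_1>f_2$, $u_{12}'=0$ and $u_{12}''\le 0$, so \eqref{eqn:main-eqns-for-u} gives
\begin{eqnarray*}
0 \ \geq \ 4\,\frac{(f_1^2-f_2^2)(f_1^2+f_2^2-f_3^2)}{f_1^2f_2^2f_3^2}.
\end{eqnarray*}
This is a contradiction unless $f_3^2\ge f_1^2+f_2^2$. To rule that out, I would apply the argument not to an arbitrary pair but to the \emph{largest} difference. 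At each $t$, relabel so that $f_1\ge f_2\ge f_3$, and take the maximum over $t$ of $\max_{i,j}u_{ij} = \log(f_{\max}/f_{\min})$. Let $t_0$ be where this maximum is attained; it is interior, since it vanishes at the endpoints, and it is positive unless all $u_{ij}\equiv 0$. At $t_0$ take $i$ with $f_i$ maximal and $j$ with $f_j$ minimal. Then $f_i^2+f_j^2-f_k^2 \ge f_j^2>0$, so the right-hand side of \eqref{eqn:main-eqns-for-u} is strictly positive.

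Ties in the labelling need some care, because $\max_{i,j} u_{ij}$ is only Lipschitz. I would handle this by noting that it is a maximum of finitely many smooth functions, and at $t_0$ one of them, $u_{ij}$, attains the maximum. That $u_{ij}$ then has a local maximum at $t_0$, which gives $u_{ij}'(t_0)=0$ and $u_{ij}''(t_0)\le 0$. This contradicts strict positivity of the right-hand side. Hence $\max u_{ij}\le 0$ everywhere, and by antisymmetry $u_{ij}\equiv 0$.

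The main obstacle is confirming the boundary behaviour: that $u_{ij}(0)=u_{ij}(T)=0$ with the correct leading normalisation. The concern is that the smoothness conditions for $\mathsf{K}=\mathsf{SU}(2)$ with the coframe normalisation $\text{d}\sigma_i=-2\sigma_j\wedge\sigma_k$ might give $f_i\sim c\,t$ with a common constant $c$ rather than distinct ones. I expect the common constant to follow because the normal sphere $\mathsf{K}/\mathsf{H}=\mathsf{SU}(2)$ must carry a round metric for the metric to be smooth at the fixed point. Once that is in place, the maximum principle argument above closes the proof.
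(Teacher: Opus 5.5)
Your proposal is correct and is essentially the paper's proof. Both arguments use the fixed-point smoothness conditions $f_i^2=t^2+t^4\Phi_i(t^2)$ at both ends (the paper cites these from Verdiani--Ziller, which settles your concern about a common leading coefficient), then apply the maximum principle for \eqref{eqn:main-eqns-for-u} to the pair realizing the largest ratio, where $f_i^2+f_j^2-f_k^2\ge f_j^2>0$ forces $f_1=f_2=f_3$. One small slip in your alternative ending: with $f_1=f_2=f_3=f$ the system reduces to $f''+\frac{\lambda}{3}f=0$, so $f=\sqrt{3/\lambda}\,\sin(\sqrt{\lambda/3}\,t)$, and the factor $3$ comes from the dimension, not from the normalisation of the $\sigma_i$.
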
 \begin{proof}
The endpoint smoothness conditions are the fixed-point case of the smoothness criterion of Verdiani--Ziller \cite[$\S$ 3.1, Eq. (3)]{VerdianiZiller2022}. That is, near $t=0$, and near $s=0$, where $s=T-t$, \begin{eqnarray*}
    f_i^2 &=& t^2 + t^4 \Phi_i(t^2), \qquad f_i^2 = s^2 + s^4 \Psi_i(s^2), \qquad i=1,2,3.
\end{eqnarray*} In particular, the ratios $f_1/f_2$, $f_2/f_3$, and $f_3/f_1$ extend continuously to both endpoints with value $1$. Suppose that $f_1, f_2, f_3$ are not identical. Then \begin{eqnarray*}
    m &: = & \max_{t \in [0,T]} \max_{i,j} u_{ij}(t)
\end{eqnarray*} is positive. Since all ratios equal $1$ at both endpoints, this maximum occurs at some interior point $t_0$. By relabeling if necessary, we may assume that the maximum is realized by $f_1/f_2$. At \(t_0\), since $f_1/f_2$ is maximal, 
\[
    f_1(t_0)\geq f_3(t_0)\geq f_2(t_0),
\]
and since \(m>0\), we have \(f_1(t_0)>f_2(t_0)\). At this interior maximum, \(u'_{12}(t_0)=0\) and \(u''_{12}(t_0)\leq 0\). Applying the difference
equation \eqref{eqn:main-eqns-for-u} to \(u_{12}=\log(f_1/f_2)\), we get
\[
    u''_{12}(t_0)
    =
    4\frac{
    \bigl(f_1(t_0)^2-f_2(t_0)^2\bigr)
    \bigl(f_1(t_0)^2+f_2(t_0)^2-f_3(t_0)^2\bigr)}
    {f_1(t_0)^2f_2(t_0)^2f_3(t_0)^2}.
\]
The first factor in the numerator is positive, while
\[
    f_1(t_0)^2+f_2(t_0)^2-f_3(t_0)^2
    \geq f_2(t_0)^2>0.
\]
Thus \(u''_{12}(t_0)>0\), contradicting \(u''_{12}(t_0)\leq 0\). So $m=0$ and every ratio $f_i / f_j \leq 1$. Therefore, $f_1=f_2=f_3$. The metric is rotationally symmetric, and the Einstein equation reduces to
\[
        \g \ = \ \text{d}t^{2}+f(t)^{2} \g_{\text{rd}},\qquad
        f''+\frac{\lambda}{3}f=0,
\]
with smooth closing conditions \(f(0)=f(T)=0\), \(f'(0)=1\), \(f'(T)=-1\). Hence \(f(t)=\sqrt{3/\lambda}\sin(\sqrt{\lambda/3}\,t)\), so \( \g\) has constant sectional curvature \(\lambda/3\). Thus \(\g\) is round.
\end{proof}

\subsection{The $\mathsf{SO}(3)$-action on $S^4$}\label{subsec:SO(3)-S4}
Let $V : = \text{Sym}_0^2(\mathbf{R}^3)$ be the space of trace-free symmetric matrices with inner product $\langle A,B \rangle := \text{tr}(AB)$. Identify $S^4$ with the unit sphere in $V$. The $\mathsf{SO}(3)$-action on $S^4$ is given by conjugation $g \cdot A : = g A g^{-1}$, $g \in \mathsf{SO}(3)$. The action is effective, has cohomogeneity-one, and the principal isotropy is \begin{eqnarray*}
    \mathsf{H} & = & \{ \text{Id}, \text{diag}(1,-1,-1), \text{diag}(-1,1,-1), \text{diag}(-1,-1,1)\} \ \simeq \ \mathbf{Z}_2\oplus \mathbf{Z}_2.
\end{eqnarray*} At the left endpoint, the singular isotropy is $\mathsf{K}_- = \mathsf{S}(\mathsf{O}(1) \times \mathsf{O}(2)) \subset \mathsf{SO}(3)$, and at the right endpoint, the singular isotropy is $\mathsf{K}_+ = \mathsf{S}(\mathsf{O}(2) \times \mathsf{O}(1)) \subset \mathsf{SO}(3)$. Hence, the singular orbits are $\mathsf{SO}(3)/\mathsf{K}_{\pm} \simeq \mathbf{RP}^2$.

\begin{theorem}\label{thm:SO(3)-S4}
    An $\mathsf{SO}(3)$-invariant Einstein metric on $S^4$ is  round. 
\end{theorem}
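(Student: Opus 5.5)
We plan to use the Bianchi system \eqref{eqn:ai-ode}--\eqref{eqn:bi-ode} to show that one half of the Weyl tensor vanishes identically. We then conclude with Hitchin's theorem: a compact self-dual (or anti-self-dual) Einstein $4$-manifold with $\lambda>0$ is homothetic to $(S^4,\g_{\text{rd}})$ or $(\mathbf{CP}^2,\g_{\text{FS}})$. Since the underlying manifold is $S^4$, this forces $\g$ to be round. By Lemma~\ref{lem:diagonal} we may write $\g$ in the diagonal form \eqref{eqn:metric-form}, with the basis adapted so that $X_1\in\mathfrak{k}_-$ and $X_3\in\mathfrak{k}_+$.

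\textbf{Step 1: asymptotics at the singular orbits.} First I would record the smoothness conditions at the two $\mathbf{RP}^2$ orbits. At $t=0$, $f_1$ is odd with $f_1'(0)$ fixed by the period of $\exp(\mathbf{R}X_1)$. The functions $f_2,f_3$ satisfy $f_2(0)=f_3(0)=a>0$, with $f_2+f_3$ even and $f_2-f_3$ odd, as dictated by the weight-$2$ isotropy action on $\text{span}\{X_2,X_3\}$. The analogous conditions hold at $t=T$ with the roles of $X_1$ and $X_3$ interchanged. From these I would compute the singular parts of the connection coefficients \eqref{eqn:SD-coefficieints} and \eqref{eqn:ASD-coefficients}. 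At $t=0$ the singular term in $L_1$ cancels against $R_2+R_3-R_1$ in $B_1$ and adds to it in $A_1$, so $A_1\sim 2/t$ while every $B_i$ is bounded. At $t=T$, since $f_3'(T)<0$, the situation reverses: $A_3$ is bounded while $B_3\sim -2/s$, where $s=T-t$. Thus $\Lambda^2_+$ is singular only at $t=0$ and $\Lambda^2_-$ is singular only at $t=T$.

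\textbf{Step 2: regular-singular analysis of the eigenvalue differences.} Subtracting the equations in \eqref{eqn:ai-ode} gives
\[
(a_2-a_3)'=-2A_1(a_2-a_3)+(\text{bounded})\cdot(a_i-a_j).
\]
I would write the vector $\mathbf X=(a_2-a_3,\;a_1-a_2,\;a_1-a_3)$ in the form \eqref{eqn:ODE-FORM} near $t=0$ and compute its indicial matrix $\mathcal{Q}$. Lemma~\ref{lem:regular-singular-germ} then restricts the possible leading orders. This should show that the only admissible germs are those in which the component transverse to the isotropy fixed line vanishes to a prescribed order. The analogous computation for the $b_i$ near $t=T$, with $B_3\sim -2/s$, constrains $(b_1-b_2)$ there. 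In parallel, I would use the Eschenburg--Wang analysis \cite{EschenburgWang2000InitialValue} to see that, modulo homothety, smooth Einstein germs at the $\mathbf{RP}^2$ orbit form a one-parameter family. The parameter is the free odd coefficient of $f_2-f_3$, and the self-dual germs (Hitchin's family, including the round metric) occupy a distinguished locus.

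\textbf{Step 3: global argument (the main obstacle).} The hard part is to pass from this local information to $W^+\equiv0$ or $W^-\equiv0$ on all of $(0,T)$. My first attempt would exploit the linearity of \eqref{eqn:ai-ode} in the differences $a_i-a_j$. If these differences vanish to infinite order at some point, Lemma~\ref{lem:regular-singular-germ} (or ordinary uniqueness at interior points) gives $W^+\equiv0$. To obtain this vanishing, I would combine the two endpoints. The regular $\Lambda^2_-$ data at $t=0$ must, after propagation, match the forced vanishing of $b_1-b_2$ at the singular end $t=T$, and symmetrically for $\Lambda^2_+$. I would seek a maximum-principle or monotone quantity built from the $b_i-b_j$ and the $B_i$, for example $\max_{i\neq j}|b_i-b_j|$ weighted by $f_1f_2f_3$, which cannot vanish at both ends unless it vanishes identically. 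If this fails, the fallback is to show via the indicial analysis that the closing condition at $t=T$ is compatible only with the self-dual locus of germs from $t=0$. In either case, once one half of $W$ vanishes, Hitchin's theorem yields the round metric.
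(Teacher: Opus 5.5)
\paragraph{Main gap: Step 3.} Step 3 is the whole theorem, and the proposal gives no mechanism for it. The systems \eqref{eqn:ai-ode}--\eqref{eqn:bi-ode} are linear in the differences, with coefficients $A_i$, $B_i$ of unknown sign. So nothing forces a quantity such as $f_1f_2f_3\max_{i\neq j}|b_i-b_j|$ to satisfy a maximum principle.

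Local indicial analysis also cannot single out the (anti-)self-dual germs. Write $h=f_2(0)=f_3(0)$ and $\alpha=\lambda h^2$. At $t=0$ the indicial matrix for $(a_1-a_2,a_1-a_3)$ has eigenvalues $-3$ and $0$, and the common initial value $\frac{36-\alpha}{2h^2}$ is a free parameter. The obstruction is therefore genuinely global.

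The missing idea is to first extract sign information from the \emph{nonlinear} system \eqref{eqn:Ai+Bi-systems}, as the paper does. Take $\sigma_1$ collapsing at $t=0$ and $\sigma_2$ at $t=T$. Then $A_1=\frac{3}{2t}+O(t)$ and $A_2,A_3=\frac{12-\alpha}{2h^2}t+O(t^2)$ near $t=0$, while $A_2=-\frac{1}{2s}+O(s)$ near $t=T$. Invariance of the orthant $\{A_i\ge0\}$ and of the locus $\{A_2=A_3=0\}$, together with Lemma~\ref{lem:regular-singular-germ} for the case $\alpha=12$, forces $\alpha>12$. Hence the solution lies in the interior of the invariant cone $\{A_1\ge0,\,A_2\le0,\,A_3\le0\}$ on $(0,T)$.

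Only then, because $A_1-A_2>0$ and $A_1-A_3>0$, does the linear system for $F_2=a_1-a_2$, $F_3=a_1-a_3$ preserve the cones $\{\varepsilon F_2\ge0,\,\varepsilon F_3\ge0\}$. One then uses that $F_3\to0$ at $t=T$ and the integrating factor for $-(A_1+2A_2)=\frac1s+O(1)$. This gives $F_3\equiv0$, then $F_2\equiv0$, i.e.\ $\mathcal{W}^+\equiv0$.

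\paragraph{Step 1 asymptotics are wrong.} This error removes exactly the feature the global argument needs. $\mathsf{K}_-^0=\mathsf{SO}(2)$ meets $\mathsf{H}\simeq\mathbf{Z}_2\oplus\mathbf{Z}_2$ in $\mathbf{Z}_2$, so it acts on the normal slice with weight $2$. With the normalization \eqref{eqn:structure-equations-sigma}, smoothness gives $f_1^2=16t^2+O(t^4)$, not $f_1\sim 2t$; for the round metric, $f_1=4\sin t$.

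Hence $R_2,R_3\sim\frac{1}{4t}$, so $A_1\sim\frac{3}{2t}$ and $B_1\sim\frac{1}{2t}$: the singular term does not cancel in $B_1$. At $t=T$ the collapsing coefficients are $A\sim-\frac{1}{2s}$ and $B\sim-\frac{3}{2s}$, not ``bounded'' and $-\frac{2}{s}$. So neither $\Lambda^2_\pm$ is regular at either end. The cancellation you describe is what happens at the $\mathbf{RP}^2$ orbit of $\mathbf{CP}^2$, where $f_1\sim2t$ and $B_1=O(t^3)$. Your plan of propagating ``regular $\Lambda^2_-$ data'' from $t=0$ therefore has no basis here. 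Moreover, the $-\frac{1}{2s}$ singularity of $A$ that you drop is precisely what excludes $\alpha\le 12$ above.

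\paragraph{Final step.} Closing via Hitchin's theorem is fine. The paper's route is more elementary: $\mathcal{W}^+=0$ and $\tau(S^4)=0$ force $\mathcal{W}^-=0$, and a conformally flat Einstein metric with $\lambda>0$ is round.
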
 

\begin{proof}
Let $\sigma_1$ denote the collapsing direction and $\sigma_2$, $\sigma_3$ the two non-collapsing directions. From \cite[$\S$4]{VerdianiZiller2021}, the smoothness conditions (with $a=4$ in their notation) are given by \begin{eqnarray*} 
    f_1^2 = 16 t^2 + t^4 \Phi_1(t^2), \qquad f_2^2+f_3^2 = \Phi_2(t^2), \qquad f_2^2-f_3^2=t\Phi_3(t^2). 
\end{eqnarray*} Set $h : = f_2(0) = f_3(0)$, $k : = f_1(T)=f_3(T)$, and write $\alpha : = \lambda h^2$. Using \eqref{eqn:notation-Ri-Ric-endomorphism} and \eqref{eqn:SD-coefficieints}, the smoothness conditions imply that \begin{eqnarray}\label{claim-eqn1}
    R_1 = O(t), \qquad R_2, R_3 = \frac{1}{4t}+O(1), \qquad A_1 = \frac{3}{2t} + O(t).
\end{eqnarray}  Then \eqref{eqn:lambda-constraint} and \eqref{eqn:SD-coefficieints} yield \begin{eqnarray}\label{claim-eqn-2}
    A_i &=& \frac{12-\alpha}{2h^2} t +O(t^2), \qquad i=2,3.
\end{eqnarray} Near the right singular orbit, set $s = T-t$, and $\tilde{f}_i(s):=f_i(T-s)$. Then \begin{eqnarray*}
    \tilde{f}_2 = 4s + O(s^3), \quad \tilde{f}_1^2+\tilde{f}_3^2 = \Psi_0(s^2), \quad \tilde{f}_1^2-\tilde{f}_3^2=s\Psi_1(s^2), \quad \Psi_0(0)=2k^2.
\end{eqnarray*} Therefore, for some constant $c$, write $\tilde{f}_1(s)=k+cs+O(s^2)$ and $\tilde{f}_3(s) = k-cs+O(s^2)$, where the opposite sign in the linear term comes from $\tilde{f}_1^2+\tilde{f}_3^2$ being even and $\tilde{f}_1^2-\tilde{f}_3^2$ being odd. Then $L_2 = -\tilde{f}_2'/\tilde{f}_2=-1/s+O(s)$, while $L_1, L_3=O(1)$. Moreover, from \eqref{eqn:notation-Ri-Ric-endomorphism}, $R_2=O(s)$ and \begin{eqnarray*}
    R_1 \ = \ \frac{\tilde{f}_1}{\tilde{f}_2 \tilde{f}_3} \ =  \ \frac{1}{4s} + \frac{c}{2k} + O(s), \qquad R_3  \ = \ \frac{\tilde{f}_3}{\tilde{f}_1 \tilde{f}_2} \ = \ \frac{1}{4s} - \frac{c}{2k} + O(s).
\end{eqnarray*} Hence, from \eqref{eqn:SD-coefficieints}, \begin{eqnarray}\label{eqn-claim-3}
    A_2 \ = \ - \frac{1}{2s} + O(s), \qquad A_i \ = \ O(1), \quad i=1,3,
\end{eqnarray} near the right singular orbit.

\vspace{0.2cm}

\noindent \emph{Claim.} The constant $\alpha=\lambda h^2$ satisfies $\alpha > 12$. \begin{proof}[Proof of Claim.]
Proceed by contradiction and suppose first that $\alpha < 12$. Then \eqref{claim-eqn1} and \eqref{claim-eqn-2} imply that the solution enters the cone $\{ A_1, A_2, A_3 \geq 0 \}$. The cone is invariant under \eqref{eqn:Ai+Bi-systems}. From \eqref{eqn-claim-3}, the right asymptotics are incompatible with a solution trapped in this cone, and therefore, $\alpha \geq 12$. Suppose $\alpha=12$. Then $A_2, A_3=O(t^2)$, and we set $\textbf{X}_A = \begin{pmatrix}
    A_2 \\ A_3
\end{pmatrix}$. From \eqref{eqn:Ai+Bi-systems}, the asymptotics \eqref{claim-eqn1} allow us to write \begin{eqnarray*}
    t \textbf{X}_A'(t) &=& \begin{pmatrix}
    -\frac{1}{2} & \frac{3}{2} \\
    \frac{3}{2} & - \frac{1}{2}
\end{pmatrix} \textbf{X}_A(t) + t \textbf{B}_A(t)\textbf{X}_A(t),
\end{eqnarray*} where the coefficients of $\textbf{B}_A(t)$ are bounded. The eigenvalues of the indicial matrix are $-2$ and $1$. In particular, since $\textbf{X}_A=O(t^2)$, any first non-zero Taylor coefficient would have order $N \geq 2$. The indicial matrix has no eigenvalue $N \geq 2$, so Lemma~\ref{lem:regular-singular-germ} gives $\textbf{X}_A \equiv 0$ near $t=0$. Therefore, near $t=0$, the solution must lie in $\{ A_2 = A_3 = 0 \}$. The common vanishing locus of $A_2, A_3$ is invariant under \eqref{eqn:Ai+Bi-systems}. From \eqref{eqn-claim-3}, the right asymptotics are incompatible with a solution trapped in this common vanishing locus. 
\end{proof}

\noindent From the Claim, $\alpha>12$, so the solution enters the cone $\mathcal C_A:=\{A_1\geq0,\ A_2\leq0,\ A_3\leq0\}$ near $t=0$. This cone is invariant under \eqref{eqn:Ai+Bi-systems}, and hence, the solution remains in $\mathcal{C}_A$. Moreover, for $0 < t <T$, the solution is contained in the interior $\{ A_1>0, A_2<0, A_3<0\}$. To show that the eigenvalues $a_i$ of $\mathfrak{R} \vert_{\Lambda_+^2}$ are all equal, let $F_2 : = a_1-a_2$ and $F_3=a_1-a_3$. The explicit expressions for the $a_i$ given in \eqref{def-ai-bi-eigenvalues}, together with the asymptotics \eqref{claim-eqn1} and  \eqref{claim-eqn-2} imply that \begin{eqnarray*}
    a_1 = \frac{12}{h^2} + O(t^2), \quad a_2, a_3 = \frac{\alpha-12}{2h^2} + O(t), \quad F_2, F_3 = \frac{36-\alpha}{2h^2} + O(t).
\end{eqnarray*} The equations \eqref{eqn:ai-ode} imply \begin{eqnarray}\label{F-ODE-System}
    F_2' &=& (A_1-A_2)F_3-(A_1+2A_3)F_2, \\
    F_3' &=& (A_1-A_3)F_2-(A_1+2A_2)F_3. \nonumber
\end{eqnarray} Set $\mathbf{X}_F : = \begin{pmatrix}
    F_2 \\
    F_3
\end{pmatrix}$. From \eqref{claim-eqn1}--\eqref{claim-eqn-2}, we can write \begin{eqnarray*}
    t \textbf{X}_F'(t) &=& \begin{pmatrix}
        - \frac{3}{2} & \frac{3}{2} \\
        \frac{3}{2} & - \frac{3}{2} 
    \end{pmatrix} \textbf{X}_F(t) + t \textbf{B}_F(t) \textbf{X}_F(t),
\end{eqnarray*} where the coefficients of $\textbf{B}_F(t)$ are bounded. The eigenvalues of the indicial matrix are $-3$ and $0$. Hence, if $\alpha=36$, Lemma~\ref{lem:regular-singular-germ} implies that $F_2=F_3=0$ near $t=0$. The common vanishing locus is preserved by the evolution equations \eqref{F-ODE-System}, and therefore, $a_1=a_2=a_3$ for all $t \in (0,T)$ by uniqueness of the linear system. If $\alpha \neq 36$, set $\varepsilon : = \text{sign}(36-\alpha)$ so that $\varepsilon F_2 >0$ and $\varepsilon F_3>0$ for sufficiently small $t>0$. Since $A_1-A_2>0$ and $A_1-A_3>0$ in $\mathcal{C}_A^{\circ}$, the cones $\{ F_2 \geq 0, F_3 \geq 0\}$ and $\{ F_2 \leq 0, F_3 \leq 0\}$ are invariant under \eqref{F-ODE-System}. Equivalently, the solution is trapped in $\{ \varepsilon F_2 \geq0, \varepsilon F_3 \geq 0 \}$. Near the right singular orbit, the preceding expansions together with \eqref{def-ai-bi-eigenvalues} give constants $c_1,c_2$ such that $a_1 = c_1+O(s)$, $a_2=c_2+O(s)$, $a_3=c_1+O(s)$. In particular, $F_3 = a_1-a_3 = O(s)$. Set $y(s): = \varepsilon F_3(T-s)$. From \eqref{eqn-claim-3}--\eqref{F-ODE-System}, \begin{eqnarray*}
y'(s)-(A_1+2A_2) y(s) &=& -(A_1-A_3) \varepsilon F_2 \ \leq \ 0,
\end{eqnarray*} where the last inequality is a consequence of the solution lying in the cone $\mathcal{C}_A$. Since $-(A_1+2A_2) = \frac{1}{s}+O(1)$, the integrating factor $\mu(s) = s + O(s^2)$. Since $F_3 = a_1-a_3 \to 0$ as $s \to 0$, we have $\mu(s)y(s) \to 0$. Integrating the inequality from $0$ to $s$ gives $\mu(s) y(s) \leq 0$, hence $y \leq 0$. The solution lies in the cone $\{ \varepsilon F_2 \geq 0, \varepsilon F_3 \geq 0\}$, and hence, $y \equiv 0$, and $F_3 \equiv 0$. The evolution equation \eqref{F-ODE-System} for $F_3$ then implies that $0 = (A_1-A_3)F_2$. Since the solution lies in the cone $\mathcal{C}_A$, $A_1-A_3>0$, and $F_2 \equiv 0$.  It follows that $a_1=a_2=a_3 = \frac{\lambda}{3}$, and hence, $\mathfrak{R} \vert_{\Lambda_+^2} = \frac{\lambda}{3} \text{Id}_{\Lambda_+^2}$. Since the metric is Einstein, \begin{eqnarray*}
    \mathfrak{R} \vert_{\Lambda_+^2} \ = \ \mathcal{W}^+ + \frac{s}{12} \text{Id}_{\Lambda_+^2} \ = \ \mathcal{W}^+ + \frac{\lambda}{3} \text{Id}_{\Lambda_+^2} \ = \ \frac{\lambda}{3} \text{Id}_{\Lambda_+^2}.
\end{eqnarray*} Thus, $\mathcal{W}^+=0$ and the metric is anti-self-dual. On $S^4$, the Hirzebruch signature theorem then implies that \begin{eqnarray*}
    0 \ = \ \tau(S^4) \ = \ \frac{1}{12\pi^2} \int_{S^4} ( | \mathcal{W}^+ |^2 - | \mathcal{W}^- |^2) \text{dvol} \ = \ - \frac{1}{12\pi^2} \int_{S^4} | \mathcal{W}^- |^2 \text{dvol}.
\end{eqnarray*} The metric is therefore conformally flat. A conformally flat Einstein metric with $\lambda>0$ has constant sectional curvature $\lambda/3$, and is thus homothetic to the round metric.
\end{proof}

\noindent The uniqueness of the round metric among the cohomogeneity-one Einstein metrics on $S^4$ is surprising in light of the results of \cite{Bohm1998InhomogeneousEinstein, FoscoloHaskins2017, Chi2024, NienhausWink2025TenSphere, ButtsworthHodgkinson2026S12}. Hence, we pose the following question.

\begin{question}
    Are there even-dimensional spheres $S^{2k}$, $k \geq 7$, for which the round metric is the unique Einstein metric invariant under a cohomogeneity-one action?
\end{question}

\section{\textbf{The $\mathsf{SU}(2)$ and $\mathsf{SO}(3)$-actions on $\mathbf{CP}^2$}}

\noindent There are two cohomogeneity-one actions on $\mathbf{CP}^2$, an action of $\mathsf{SU}(2)$ and an action of $\mathsf{SO}(3)$.

\subsection{The $\mathsf{SU}(2)$-action on $\mathbf{CP}^2$}\label{subsec:SU(2)-CP2}
Write $\mathbf{CP}^2 = \mathbf{P}(\mathbf{C} \oplus \mathbf{C}^2)$, and let $\mathsf{SU}(2)$ act trivially on the $\mathbf{C}$-factor and by the standard representation on $\mathbf{C}^2$, i.e., $g \cdot [z_0 : v] = [z_0 : gv ]$. For $z_0 \neq 0$ and $v \neq 0$, the stabilizer is trivial, so $\mathsf{H} = \{ e \}$. Since $[1:0] \in \mathbf{P}(\mathbf{C} \oplus \mathbf{C}^2)$ is fixed, the singular isotropy $\mathsf{K}_- = \mathsf{SU}(2)$. The hyperplane $\mathbf{P}(\mathbf{C}^2) \simeq \mathbf{CP}^1 \simeq S^2$ is a singular orbit, with isotropy $\mathsf{K}_+ = S^1 \subset \mathsf{SU}(2)$.

\begin{theorem}\label{thm:SU(2)-CP2}
    An $\mathsf{SU}(2)$-invariant Einstein metric on $\mathbf{CP}^2$ is $\mathsf{U}(2)$-invariant, and thus, homothetic to the Fubini--Study metric.
\end{theorem}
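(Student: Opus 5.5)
We follow the maximum principle argument of Theorem~\ref{thm:SU(2)-S4}. The goal is to show that two of the three functions $f_i$ coincide. Remark~\ref{remark:2.4} then makes the metric $\mathsf{U}(2)$-invariant, and the B\'erard-Bergery--Derdzi\'nski classification \cite{BerardBergery1982NouvellesEinstein, Derdzinski1983SelfDualKahler} identifies it, up to homothety, with the Fubini--Study metric.

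By Lemma~\ref{lem:diagonal} we may write $\g$ in the diagonal form \eqref{eqn:metric-form}. We choose the basis so that $X_3$ generates $\mathsf{K}_+ = S^1$. At $t=0$ the orbit is a fixed point with $\mathsf{K}_-=\mathsf{SU}(2)$, so the Verdiani--Ziller conditions give, exactly as in Theorem~\ref{thm:SU(2)-S4},
\[
f_i^2 = t^2 + t^4\Phi_i(t^2),
\]
and every ratio $f_i/f_j$ extends to $t=0$ with value $1$. At $t=T$, with $s = T-s'$ replaced by $s=T-t$, the circle direction collapses, $f_3 = O(s)$, while $f_1, f_2$ tend to a common positive value $k$. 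The smoothness conditions for $\mathsf{K}_+=S^1 \subset \mathsf{SU}(2)$, whose isotropy rotates $\mathrm{span}\{X_1,X_2\}$ with nontrivial weight, should give $f_1^2+f_2^2$ even in $s$ and $f_1^2-f_2^2 = s^{m}\Psi(s^2)$ for some $m\ge 1$. In particular $u_{12}(T)=0$.

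We then run the maximum principle on $u_{12}=\log(f_1/f_2)$ alone. Suppose $f_1\not\equiv f_2$; after swapping the indices we may assume $\max_{[0,T]} u_{12} = m>0$. Since $u_{12}$ vanishes at both endpoints, this maximum is attained at an interior point $t_0$. There $u_{12}'(t_0)=0$ and $u_{12}''(t_0)\le 0$, and by \eqref{eqn:main-eqns-for-u}
\[
u_{12}''(t_0) = 4\frac{(f_1^2-f_2^2)(f_1^2+f_2^2-f_3^2)}{f_1^2f_2^2f_3^2}\Big|_{t_0}.
\]
The first factor is positive, so we get a contradiction as soon as $f_1^2+f_2^2 > f_3^2$ at $t_0$. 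Unlike in Theorem~\ref{thm:SU(2)-S4}, we cannot simply choose the maximal ratio among all three pairs, because $u_{13}$ and $u_{23}$ blow up at $t=T$. We therefore need an a priori bound $f_3 < \max(f_1,f_2)$ on $(0,T)$, or at least $f_3^2 < f_1^2+f_2^2$ there.

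The key step is to prove this bound. We would apply the same maximum principle to $u_{31}$ and $u_{32}$. Both vanish at $t=0$ and tend to $-\infty$ at $t=T$. So if either has a positive maximum, say $u_{31}$, it is attained at an interior point $t_1$. Choose the pair $(3,j)$ with the largest maximum among $u_{31}, u_{32}$. Then at $t_1$ we have $f_3\ge f_j$ and $f_3/f_j \ge f_3/f_{l}$, so $f_l \ge f_j$ for the remaining index $l$. Equation \eqref{eqn:main-eqns-for-u} then gives
\[
u_{3j}''(t_1) \propto (f_3^2-f_j^2)(f_3^2+f_j^2-f_l^2),
\]
and we need the second factor to be positive. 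Take the global maximum of $\max_{i\ne j}u_{ij}$ over all pairs and over $[0,T]$. The pairs involving the index $3$ in the denominator tend to $+\infty$ at $t=T$, so we restrict to the pairs $u_{31}, u_{32}, u_{12}, u_{21}$, all of which are bounded above on $[0,T]$. Over these pairs the maximum is positive unless $f_1\equiv f_2$ and $f_3\le f_1$. It is attained in the interior, and at the maximizing point the third function lies between the other two; this is the same ordering used in Theorem~\ref{thm:SU(2)-S4}. We must check that the third function is not $f_3$ in a pair $u_{12}$ where $f_3$ exceeds both. That case is excluded because $u_{31}$ or $u_{32}$ would then be larger. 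The ordering gives positivity of the second factor, hence a contradiction.

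Therefore $f_1\equiv f_2$, and the conclusion follows from Remark~\ref{remark:2.4}. The main obstacle is the right endpoint analysis: we must derive the correct smoothness conditions at the $S^2$ orbit, so that $u_{12}(T)=0$ and $u_{31},u_{32}\to-\infty$, and we must verify via Remark~\ref{remark:2.4} that the rotation in $\mathrm{span}\{X_1,X_2\}$ preserves $\mathsf{K}_+$. This requires choosing a diagram representative in which $\mathsf{K}_+$ is generated by $X_3$.
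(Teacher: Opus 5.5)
Your argument is correct and is essentially the paper's proof. The paper also takes the maximum over exactly the four ratios $f_1/f_2$, $f_2/f_1$, $f_3/f_1$, $f_3/f_2$. It uses that for each such pair $(i,j)$ the pair $(k,j)$ is also in the set, so $f_k\le f_i$ at the maximum and $f_i^2+f_j^2-f_k^2\ge f_j^2>0$; it then concludes $f_1\equiv f_2$ and applies Remark~\ref{remark:2.4}. For the pairs $(1,2)$ and $(2,1)$ you only obtain $f_3\le f_i$, not that $f_3$ lies between $f_1$ and $f_2$, but as you note, that inequality is all that is needed.
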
 \begin{proof}
From the smoothness conditions \cite[§3.1, (8), (9), Rem.~3.5, Table~4, Lem.~3.6]{Donovan2025}, which specialize the general criterion of Verdiani--Ziller \cite[Thm.~1, Prop.~4]{VerdianiZiller2022}, the fixed-point end is modeled on the standard \(\mathsf{SU}(2)\)-action on \(\mathbf C^2\), and hence, after using arc-length \(t\), $f_i^2=t^2+t^4\Phi_i(t^2)$, $i=1,2,3$. At the \(\mathbf{CP}^1\)-end, with \(s=T-t\) and with \(\sigma_3\) the collapsing direction, the \(n=1\) case of the \(O(-n)\)-smoothness condition gives $f_3^2=s^2+s^4\Phi_3(s^2)$, $f_1^2+f_2^2=\Psi_0(s^2)$, and $f_1^2-f_2^2=s^4\Psi_1(s^2)$, with $\Psi_0(0)>0$.  In particular, for all $i,j$, the ratios $f_i/f_j \to 1$ as $t \to 0$, while \begin{eqnarray*}
    \lim_{t \to T} f_1/f_2 \ = \ \lim_{t \to T} f_2/f_1 \ = \ 1, \qquad \lim_{t \to T} f_3/f_1 \ = \ \lim_{t \to T} f_3/f_2 \ = \ 0.
\end{eqnarray*} Hence, the ratios $f_1/f_2$, $f_2/f_1$, $f_3/f_1$, $f_3/f_2$ extend continuously to $[0,T]$ with boundary values at most $1$. We will show that each of these ratios is bounded above by $1$ on $[0,T]$. Proceed by contradiction and suppose this is not the case. Let $\Gamma : = \{ (1,2), (2,1), (3,1),(3,2)\}$. Then for some $(i,j) \in \Gamma$, and some $t_0 \in (0,T)$, the ratio $f_i/f_j$ achieves an interior maximum at $t_0$ with $f_i(t_0)/f_j(t_0) > 1$. Let $k$ be the remaining index. Observe that if $(i,j) \in \Gamma$ and $k \neq i$, $k \neq j$, then $(k,j) \in \Gamma$. Hence, $f_i(t_0) > f_j(t_0)$, and $f_i(t_0) \geq f_k(t_0)$, and \eqref{eqn:main-eqns-for-u} implies that \begin{eqnarray*}
    0 \ \geq \ u_{ij}''(t_0) & = & 4 \frac{(f_i^2 - f_j^2)(f_i^2+f_j^2-f_k^2)}{f_i^2 f_j^2 f_k^2} \ > \ 0.
\end{eqnarray*} It follows that $f_1/f_2 \leq 1$ and $f_2/f_1 \leq 1$, and thus, $f_1 \equiv f_2$. From Remark~\ref{remark:2.4}, the metric g is $\mathsf{U}(2)$-invariant. 
\end{proof}

\subsection{The $\mathsf{SO}(3)$-action on $\mathbf{CP}^2$}\label{subsec:SO(3)-CP2}
The $\mathsf{SO}(3)$-action on $\mathbf{CP}^2$ is obtained from the transitive action of $\mathsf{SU}(3)$. Indeed, write  $\mathbf{CP}^2 = \mathbf{P}(\mathbf{C}^3)$, and let $\mathsf{SO}(3) \subset \mathsf{SU}(3)$ act by real matrices, complex linearly. That is, for $A \in \mathsf{SO}(3)$ and $z \in \mathbf{C}^3$, we have $A \cdot [z] = [Az]$.  The singular orbits are a totally real $\mathbf{RP}^2 \subset \mathbf{CP}^2$ with isotropy $\mathsf{K}_- = \mathsf{S}(\mathsf{O}(1) \times \mathsf{O}(2)) \simeq \mathsf{O}(2)$, and the conic $\{ [z_0:z_1:z_2]: z_0^2+z_1^2+z_2^2=0 \} \simeq \mathbf{S}^2$, with isotropy $\mathsf{K}_+ = \mathsf{SO}(2)$.

\begin{theorem}\label{thm:SO(3)-CP2}
    An $\mathsf{SO}(3)$-invariant Einstein metric on $\mathbf{CP}^2$ is K\"ahler, and thus, homothetic to the Fubini--Study metric.
\end{theorem}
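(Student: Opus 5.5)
The plan is to produce a parallel self-dual or anti-self-dual $2$-form and then invoke Lemma~\ref{lem:SD-Kahler}. By Lemma~\ref{lem:diagonal} the metric is diagonal, $\g=\text{d}t^2+\sum f_i^2\sigma_i^2$, and we label so that $\sigma_1$ is dual to the Lie algebra of the circle contained in both $\mathsf{K}_-\simeq\mathsf{O}(2)$ and $\mathsf{K}_+=\mathsf{SO}(2)$. With this labelling $f_1$ collapses at both singular orbits, while $f_2,f_3$ stay positive. The natural candidate for the Kähler form is then $\Omega_1^+$ or $\Omega_1^-$ (after fixing the orientation). By \eqref{eqn:d-Omega+}--\eqref{eqn:d-Omega-}, $\Omega_1^-$ is parallel if and only if $B_2\equiv B_3\equiv 0$ on $(0,T)$, and similarly for $\Omega_1^+$ with $A_2,A_3$. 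So the whole proof reduces to showing that the solution of \eqref{eqn:Bi-systems-fix} (or \eqref{eqn:Ai+Bi-systems}) lies in the invariant locus $\{B_2=B_3=0\}$.

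First I derive the endpoint asymptotics from the Verdiani--Ziller smoothness conditions, exactly as in the proof of Theorem~\ref{thm:SO(3)-S4}.
\begin{itemize}
\item At the $\mathbf{RP}^2$-end ($t\to0$), the conditions are of the same type as there but with the slope constant adapted to $\mathsf{H}\simeq\mathbf Z_2$. They give $f_1\sim a t$, $f_2(0)=f_3(0)=h$ and $f_2^2-f_3^2=t\Phi(t^2)$. Hence $R_1=O(t)$, $R_2,R_3\sim 1/(at)$, $B_1\sim c/t$, and, using \eqref{eqn:lambda-constraint}, $B_2,B_3=\kappa(\alpha)\,t+O(t^2)$ with $\alpha=\lambda h^2$ and $\kappa$ affine in $\alpha$.
\item At the conic end ($s=T-t\to0$), the $\mathsf{SO}(2)$ smoothness conditions give $f_1\sim bs$, $f_2(T)=f_3(T)=k$ and $f_2^2-f_3^2=O(s^2)$. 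From this I read off the leading singular behaviour of the $B_i$, expecting $B_1=O(1)$ and $B_2,B_3$ either bounded with a definite sign or vanishing to an order incompatible with a nonzero cone solution.
\end{itemize}
It may turn out that $A$ rather than $B$ has these properties, which just amounts to the choice of orientation.

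Next I run a trapping argument mimicking the Claim in the proof of Theorem~\ref{thm:SO(3)-S4}. The sets $\{B_1\ge0,\ B_2,B_3\ge0\}$ and $\{B_1\ge0,\ B_2,B_3\le0\}$ are invariant under \eqref{eqn:Bi-systems-fix}, since $B_j' = (\cdots)B_j + B_1B_k$ with $B_1>0$ near $t=0$. If $\kappa(\alpha)\ne0$, the solution enters one of these cones near $t=0$ and stays there. I then show each cone is incompatible with the conic-end asymptotics: the sign of $B_2+B_3$ or the leading coefficient forced at $s=0$ must contradict it. I would do this by the integrating-factor argument used for $y(s)$ in Theorem~\ref{thm:SO(3)-S4}, which forces $B_2$ or $B_3$ to vanish identically, and then $B_1B_k=0$ kills the other. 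If $\kappa(\alpha)=0$, then $\mathbf X=(B_2,B_3)^T=O(t^2)$ satisfies a system of the form \eqref{eqn:ODE-FORM}. Its indicial matrix has entries like $\pm\tfrac12,\tfrac32$ and so no eigenvalue $N\ge2$, so Lemma~\ref{lem:regular-singular-germ} gives $\mathbf X\equiv0$ near $0$, hence everywhere by invariance and ODE uniqueness.

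Once $B_2\equiv B_3\equiv0$, $\Omega_1^-$ is a nonzero parallel form, so $\g$ is Kähler by Lemma~\ref{lem:SD-Kahler}. It is then Kähler--Einstein with $\lambda>0$ on $\mathbf{CP}^2$ (the complex structure being the standard one, since $\mathbf{CP}^2$ carries a unique complex structure up to biholomorphism), and Bando--Mabuchi uniqueness gives $\g$ homothetic to $\g_{\text{FS}}$. The main obstacle I expect is the exclusion of both cones: one sign of $\kappa(\alpha)$ is probably excluded easily from the conic end, but the other may need the extra curvature-eigenvalue system \eqref{eqn:bi-ode}, as with $F_2,F_3$ in Theorem~\ref{thm:SO(3)-S4}. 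If so, I would fall back on proving $b_2=b_3$ via that cone argument and extract parallelism from it.
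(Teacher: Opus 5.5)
There is a genuine gap, and it starts with the group diagram. No circle lies in both $\mathsf{K}_-$ and $\mathsf{K}_+$. Take $\mathsf{H}=\{\mathrm{Id},\mathrm{diag}(-1,-1,1)\}$. Then the identity component of $\mathsf{K}_-=\mathsf{S}(\mathsf{O}(1)\times\mathsf{O}(2))$, the stabiliser of $[e_1]\in\mathbf{RP}^2$, consists of the rotations about $e_1$. By contrast, $\mathsf{K}_+$, the stabiliser of $[e_1+ie_2]$ on the conic, consists of the rotations about $e_3$. Hence $\mathsf{K}_-\cap\mathsf{K}_+=\mathsf{H}$. No other representative of the diagram fixes this, since $\mathsf{N}_{\mathsf{SO}(3)}(\mathsf{H})_0=\mathsf{K}_+$.

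So, as in the paper, $\sigma_1$ (dual to $E_{23}$) collapses at the $\mathbf{RP}^2$, while a different direction $\sigma_3$ (dual to $E_{12}$) collapses at the conic, where $f_1,f_2$ stay positive. Your conic-end asymptotics are therefore computed in a configuration that does not occur. Your $\mathbf{RP}^2$-end prediction $B_1\sim c/t$ is also wrong. With $f_1^2=4t^2+O(t^4)$ and $f_2^2-f_3^2=t^2\Phi_2(t^2)$, the $1/t$ terms cancel, giving $B_1=O(t^3)$ and $A_1\sim 2/t$.

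More decisively, the identity you aim to prove fails for $\g_{\text{FS}}$ itself, so no trapping argument can establish it. First, $\mathrm{Ad}(\mathrm{diag}(-1,-1,1))$ reverses $\sigma_1$ and $\sigma_2$. So among the $\Omega_i^{\pm}$ only $\Omega_3^{\pm}$ descend to $M^\circ$, and $\Omega_1^{\pm}$ is not even a form there. Second, for $\g_{\text{FS}}$ the moment map at $[\cos\theta\,e_1+i\sin\theta\,e_2]$ points along $E_{12}$. Hence the Kähler form restricted to an orbit is a multiple of $e^1\wedge e^2$, and it equals $\pm\Omega_3^{\pm}$.

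Now suppose $\Omega_1^{+}$ were also parallel. Then $A_2=A_3=0$, and \eqref{def-ai-bi-eigenvalues} gives $a_2=a_3=0$; the same holds for $B$ and $b_i$. But for $\g_{\text{FS}}$ the curvature eigenvalues are $\frac{\lambda}{3}(1,1,1)$ on one of $\Lambda^2_\pm$. On the other they are $\lambda$ on $\Omega_3$ and $0$ on $\Omega_1,\Omega_2$. In either orientation, $a_2=a_3=0$ (respectively $b_2=b_3=0$) fails. Your fallback via $b_2=b_3$ therefore cannot produce parallelism of $\Omega_1^\pm$ either.

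The fix is the paper's route: take $\Omega_3^-$ and prove $B_1=B_2=0$.
\begin{itemize}
\item At the conic end, $\widetilde B_3=-\frac{3}{2s}+O(s)$ and $\widetilde B_{1,2}=\frac{\beta-12}{2q^2}s+O(s^2)$, with $\beta=\lambda f_1(T)^2$.
\item If $\beta\neq 12$, the solution is trapped in the backward-invariant cone $\{\varepsilon B_1\ge 0,\ \varepsilon B_2\ge 0,\ B_3\le 0\}$. At the $\mathbf{RP}^2$ end, the equation $B_1'+(R_2+R_3-3R_1+B_1)B_1=B_2B_3$ has integrating factor $\mu=t+O(t^2)$, which forces $B_1\equiv 0$ and gives a contradiction.
\item If $\beta=12$, Lemma~\ref{lem:regular-singular-germ}, with indicial eigenvalues $-2$ and $1$, gives $B_1=B_2=0$.
\end{itemize}
You also need to extend the parallel form across the codimension-two singular orbits before invoking Lemma~\ref{lem:SD-Kahler}; your proposal omits this step.
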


\begin{proof}
Put the singular orbit with isotropy $\mathsf K_-=\mathsf S(\mathsf O(1)\times \mathsf O(2))$ at \(t=0\), and the singular orbit with isotropy
$\mathsf K_+=\mathsf{SO}(2)$ at $t=T$. We choose the invariant coframe so that the \(\sigma_1\)-direction collapses at \(t=0\), and the \(\sigma_3\)-direction collapses at \(t=T\). From \cite[Proposition 3.1]{BettiolKrishnan2019}, the smoothness conditions near \(t=0\) are\begin{eqnarray*}
    f_1^2 = 4t^2+t^4 \Phi_1(t^2), \quad f_2^2 +f_3^2=\Phi_0(t^2), \quad f_2^2-f_3^2=t^2\Phi_2(t^2),
\end{eqnarray*} where $\Phi_0(0)>0$. Let $s := T-t$ and $\tilde{f}_i(s) := f_i(T-s)$. The smoothness conditions near $t=T$ are \begin{eqnarray*}
    \tilde{f}_3^2=16s^2+s^4\Psi_3(s^2), \quad  \tilde{f}_1^2+ \tilde{f}_2^2=\Psi_0(s^2), \quad  \tilde{f}_1^2-\tilde{f}_2^2=s\Psi_1(s^2),
\end{eqnarray*} where $\Psi_0(0)>0$. We will show that $\Omega_3^-$ is a parallel anti-self-dual $2$-form. From \eqref{eqn:d-Omega-}, the $2$-form $\Omega_3^-$ is parallel if and only if $B_1=B_2=0$. Let $\mathcal{K} : = \{ B_1= B_2 = 0 \}$. Then $\mathcal{K}$ is invariant under the evolution equations \eqref{eqn:Bi-systems-fix}. From \eqref{eqn:notation-Ri-Ric-endomorphism}, \eqref{eqn:L-derivative}, \eqref{eqn:lambda-constraint}, and \eqref{eqn:ASD-coefficients}, the smoothness conditions near $t=0$ imply that \begin{eqnarray}\label{eqn-B1-claim-1}
    B_1=O(t^3), \qquad R_2+R_3-3R_1+B_1=\frac1t+O(t).
\end{eqnarray} Set $q : = f_1(T) = f_2(T)>0$ and $\beta : = \lambda q^2$. Set $\widetilde{B}_i(s) : = B_i(T-s)$. The smoothness conditions near $t=T$, together with \eqref{eqn:ASD-coefficients} and \eqref{eqn:lambda-constraint} yield \begin{eqnarray}\label{eqn:tilde-Bi}
\widetilde{B}_3 =-\frac{3}{2s}+O(s), \qquad \widetilde{B}_i =\frac{\beta-12}{2q^2}s+O(s^2),\qquad i=1,2.
\end{eqnarray}

\vspace{0.2cm}

\noindent \emph{Claim.} The constant $\beta = 12$. \begin{proof}[Proof of Claim.]
    Proceed by contradiction and suppose that $\beta \neq 12$. Set $\varepsilon : = \text{sign}(\beta-12)$. For $s>0$ sufficiently small, the solution lies in the cone $\mathcal C_\varepsilon := \{\varepsilon B_1\ge0,\ \varepsilon B_2\ge0,\ B_3\le0\}$. This cone is backward invariant under \eqref{eqn:Bi-systems-fix}. Moreover, the inequalities are strict on compact subintervals of \((0,T)\). The evolution equation \eqref{eqn:Bi-systems-fix} for $B_1$ can be written as \begin{eqnarray*}
    B_1' + (R_2 + R_3 - 3R_1 + B_1)B_1  &=& B_2 B_3.
\end{eqnarray*} From \eqref{eqn-B1-claim-1}, the integrating factor $\mu(t)$ for this equation satisfies $\mu(t) = t + O(t^2)$. Since the solution lies in the cone $\mathcal{C}_{\varepsilon}$, we know that $\varepsilon B_1 \geq 0$ and $\varepsilon B_2 B_3 \leq 0$. Hence, \begin{eqnarray}\label{eqn-integralb}
    0 \leq \varepsilon B_1(t) \mu(t) &=& \int_0^t \varepsilon \mu(\zeta) B_2(\zeta) B_3(\zeta) d\zeta \ \leq \ 0.
\end{eqnarray} Hence, $\varepsilon B_1(t) \equiv 0$ near $t=0$. Inserting this into \eqref{eqn-integralb} implies $B_2(t) B_3(t) \equiv 0$ near $t=0$. Since the inequalities defining $\mathcal{C}_{\varepsilon}$ are strict on compact subintervals of $(0,T)$, we must have $\beta=12$.
\end{proof}

\noindent Set $\textbf{X}_{\widetilde{B}}(s) : = \begin{pmatrix}
    \widetilde{B}_1(s) \\ \widetilde{B}_2(s)
\end{pmatrix}$. Since $\beta=12$, near $s=0$ we have $\textbf{X}_{\widetilde{B}}(s) = O(s^2)$. Moreover, from \eqref{eqn:tilde-Bi}, the evolution equations \eqref{eqn:Bi-systems-fix} can be written as \begin{eqnarray*}
    s \textbf{X}_{\widetilde{B}}'(s) &=& \begin{pmatrix}
        -\frac12&\frac32\\
        \frac32&-\frac12
    \end{pmatrix} \textbf{X}_{\widetilde{B}}(s) + s \textbf{B}_{\widetilde{B}}(s) \textbf{X}_{\widetilde{B}}(s),
\end{eqnarray*} where the coefficients of $\textbf{B}_{\widetilde{B}}(s)$ are bounded. Since the eigenvalues of the indicial matrix are $-2$ and $1$, Lemma~\ref{lem:regular-singular-germ} implies $\widetilde{B}_1 = \widetilde{B}_2 =0$ near $s=0$. Hence, by the invariance of $\mathcal{K} = \{ B_1 = B_2 = 0 \}$, this holds on the entire regular interval. 

\vspace{0.2cm}

\noindent \emph{Claim.} The $2$-form $\Omega_3^-$ extends uniquely to a smooth parallel $2$-form on all of $M$.

\begin{proof}[Proof of Claim.]
    Let $S = S_- \cup S_+$ be the union of the singular orbits. On $M \setminus S$, the $2$-form $\Omega_3^-$ is smooth and satisfies $| \Omega_3^- |_{\g}^2 = 2$. In particular, $\Omega_3^-$ is locally bounded near $S$. Let $U$ be a local chart with coordinates $(y,z) \in \mathbf{R}^2 \times \mathbf{R}^2$ such that $U \cap S = \{ z = 0 \}$. We choose $U$ sufficiently small so that $\Lambda^2_-$ is smoothly trivialized over $U$ with local frame $e_{\alpha}$. Writing $\Omega_3^- = \sum_{\alpha} u^{\alpha} e_{\alpha}$, the equation $\nabla \Omega_3^- =0$ is equivalent to $\partial_k u^{\alpha} + \Gamma_{k\beta}^{\alpha} u^{\beta}=0$ on $U \cap \{ z \neq 0 \}$, where $\Gamma_{k \beta}^{\alpha}$ are smooth functions. Since $S$ has codimension $2$, extend each $u^{\alpha} \in \mathcal{C}^{\infty}(U \cap \{ z \neq 0 \})$ trivially to $L^{\infty}(U)$. Let $D_{\varepsilon} := \{ | z | > \varepsilon \}$. For $\varphi \in \mathcal{C}^{\infty}_c(U)$, \begin{eqnarray*}
        \int_{D_{\varepsilon}} u^{\alpha} \partial_k \varphi   &=& \int_{\partial D_{\varepsilon}} u^{\alpha} \varphi \nu_k + \int_{D_{\varepsilon}}  \Gamma_{k\beta}^{\alpha} u^{\beta} \varphi .
    \end{eqnarray*} Since $S$ has codimension $2$, the area of the boundary of an $\varepsilon$-tube around $S$ is $O(\varepsilon)$. Therefore, the first integral converges to zero as $\varepsilon \searrow 0$. Since $\Gamma_{k\beta}^{\alpha}$, $u^{\beta}$ and $\varphi$ are bounded, the second integral converges by the dominated convergence theorem. Hence, $\partial_k u^{\alpha} + \Gamma_{k\beta}^{\alpha} u^{\beta}=0$ holds on $U$ in the sense of distributions. Therefore, $u \in W_{\text{loc}}^{1,\infty}$ and bootstrapping with this equation implies that $u \in W_{\text{loc}}^{k, \infty} \subset \mathcal{C}^{k-1}_{\text{loc}}$ for all $k \in \mathbf{N}$.
\end{proof}

\noindent From the claim, $\Omega_3^-$ defines a smooth parallel anti-self-dual $2$-form on $M$. From Lemma~\ref{lem:SD-Kahler}, the metric g is K\"ahler--Einstein. The only complex surface admitting a K\"ahler--Einstein metric with $\lambda>0$ and diffeomorphic to $\mathbf{CP}^2$ is $\mathbf{CP}^2$, endowed with the Fubini--Study metric (see \cite{Tian1987KahlerEinsteinCertain, Tian1990CalabiConjectureSurfaces} together with \cite{barth2004compact} for the general classification). Since the K\"ahler--Einstein metric is unique up to homothety \cite{BandoMabuchi1987}, this completes the proof.  
\end{proof}

\section{\textbf{The $\mathsf{SU}(2)$-action on $\mathbf{CP}^2 \sharp \overline{\mathbf{CP}}^2$}}\label{sec:Trapping}
\noindent We now consider the cohomogeneity-one actions on $M:= \mathbf{CP}^2 \sharp \overline{\mathbf{CP}}^2$, the blow-up $\pi : M \to \mathbf{CP}^2$ of $\mathbf{CP}^2$ at one point $p \in \mathbf{CP}^2$. We can write $M = \{ ([w:z], [\ell]) \in \mathbf{P}(\mathbf{C} \oplus \mathbf{C}^2) \times \mathbf{P}(\mathbf{C}^2) : z \in \ell \}$, with the $\mathsf{SU}(2)$-action given by $g \cdot ([w:z],[\ell]) = ([w:gz],[g\ell])$. One singular orbit is the exceptional divisor $E:= \pi^{-1}(p) \simeq \mathbf{CP}^1 \simeq \mathsf{SU}(2)/\mathsf{U}(1)$, while the other is $\{ w=0\} \simeq \mathbf{CP}^1 \simeq \mathsf{SU}(2)/\mathsf{U}(1)$, the strict transform of the projective line not passing through $p$.

\begin{theorem}\label{thm:SU(2)-CP2-blow-up}
    An $\mathsf{SU}(2)$-invariant Einstein metric on $\mathbf{CP}^2 \sharp \overline{\mathbf{CP}}^2$ is homothetic to the Page metric.
\end{theorem}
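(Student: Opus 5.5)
The plan is to reduce to the $\mathsf{U}(2)$-invariant case via Remark~\ref{remark:2.4}. Once $f_1\equiv f_2$, the metric is $\mathsf{U}(2)$-invariant, and the classification of B\'erard-Bergery and Derdzi\'nski \cite{BerardBergery1982NouvellesEinstein, Derdzinski1983SelfDualKahler} (see also \cite{PetersenZhu1995}) shows that a non-locally-symmetric $\mathsf{U}(2)$-invariant Einstein metric on $\mathbf{CP}^2\sharp\overline{\mathbf{CP}}^2$ is homothetic to the Page metric. The whole task is therefore to prove $f_1\equiv f_2$ with a maximum principle for $u_{12}=\log(f_1/f_2)$, as in Theorem~\ref{thm:SU(2)-CP2}.

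The difficulty is that in the given diagram both singular isotropy groups are circles $\mathsf{U}(1)\subset\mathsf{SU}(2)$. With respect to a fixed adapted basis they need not collapse the same direction. For the $\mathsf{U}(2)$-invariant model they are equal, both $\mathsf{K}_\pm=\{\exp(\theta X_3)\}$, and the smoothness conditions at each end are of $O(-n)$ type with $n=1$ at $E$ and $n=1$ (with opposite orientation) at the other end. So I would first fix the group diagram presentation. Using the freedom to conjugate $\mathsf{K}_+$ by $N_{\mathsf{G}}(\mathsf{H})_0=\mathsf{SU}(2)$ (since $\mathsf{H}=\{e\}$), I would choose the representative in which $\mathsf{K}_-=\mathsf{K}_+$ is generated by $X_3$. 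This is the ``change of group diagram presentation'' mentioned in the introduction. In this presentation the diagonal form of Lemma~\ref{lem:diagonal} has $\sigma_3$ collapsing at both ends. The Verdiani--Ziller/Donovan smoothness conditions then give
\[
f_3^2=t^2+O(t^4),\qquad f_1^2+f_2^2=\Phi_0(t^2),\qquad f_1^2-f_2^2=t^{4}\Phi_1(t^2),
\]
with $\Phi_0(0)>0$, and the analogous statements in $s=T-t$. The exact power of $t$ in $f_1^2-f_2^2$ matters only in that it is positive. Consequently $f_1/f_2\to1$ and $f_3/f_i\to0$ at both ends.

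With these boundary values I would run the argument of Theorem~\ref{thm:SU(2)-CP2} using $\Gamma=\{(1,2),(2,1),(3,1),(3,2)\}$. All four ratios extend continuously to $[0,T]$ with boundary values at most $1$. Suppose some ratio $f_i/f_j$ with $(i,j)\in\Gamma$ exceeds $1$. Take the largest such ratio at its interior maximum $t_0$. Since $(k,j)\in\Gamma$ for the third index $k$, we get $f_i(t_0)>f_j(t_0)$ and $f_i(t_0)\ge f_k(t_0)$. Then \eqref{eqn:main-eqns-for-u} gives $u_{ij}''(t_0)>0$, which contradicts the maximum. Hence $f_1/f_2\le1$ and $f_2/f_1\le1$, so $f_1\equiv f_2$. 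Because $\eta_3=\exp(\mathbf{R}X_3)$ normalizes $\mathsf{H}=\{e\}$ and preserves $\mathsf{K}_\pm=\eta_3$ in this presentation, Remark~\ref{remark:2.4} makes $\mathsf{g}$ $\mathsf{U}(2)$-invariant. The classification then yields the Page metric, since $\mathbf{CP}^2\sharp\overline{\mathbf{CP}}^2$ carries no locally symmetric Einstein metric (it is neither $S^2\times S^2$ nor $\mathbf{CP}^2$).

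The main obstacle is the first step: justifying that the diagonalizing frame from \cite{Donovan2025} can be chosen so that the same Killing direction collapses at both singular orbits. The worry is that Lemma~\ref{lem:diagonal} could produce a diagonal frame in which $\mathsf{K}_+$ is a conjugate $n\mathsf{K}_-n^{-1}$ not aligned with a coordinate axis. In that case the boundary values of $f_i/f_j$ at $t=T$ would be unknown, and the four-ratio trick would fail. I would first try to use that the diagonal frame is determined by the collapsing directions: at each end the collapsing Killing field must be an eigendirection of $\mathsf{g}_t$ for all $t$ by the smoothness conditions. I would combine this with the $\mathbf{Z}_2$-symmetries that Donovan's diagonality argument uses, aiming to show that both collapsing directions equal $X_3$. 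If that fails, I would fall back on the equivalence-of-diagrams freedom together with the fact that an equivariant diffeomorphism commuting with the action preserves the Einstein condition.
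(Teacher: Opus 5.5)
Your outline follows the paper's proof. You put the diagram in the form $\{e\}\subset\{\mathsf{K}_-,\mathsf{K}_-\}\subset\mathsf{SU}(2)$ so that one axis collapses at both ends, run the four-ratio maximum principle, invoke Remark~\ref{remark:2.4}, and conclude with B\'erard-Bergery--Derdzi\'nski. Your fallback for the flagged step is exactly the paper's appeal to \cite[Remark 2.6]{VerdianiZiller2022}. However, the step you flag is a genuine gap, and neither of your routes closes it.

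Your first route only gets halfway. Since the frame is fixed and the limiting form $\g_0$ at each end is diagonal with kernel $\mathfrak{k}_\pm$, each collapsing line is a coordinate axis. This gives $\mathfrak{k}_+=\mathfrak{k}_-$ \emph{or} $\mathfrak{k}_+\perp\mathfrak{k}_-$, nothing more.

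Your second route cannot remove the orthogonal case. The equivalence $\mathsf{K}_+\sim a\mathsf{K}_+a^{-1}$, $a\in N_{\mathsf{G}}(\mathsf{H})_0$, identifies $\mathsf{G}$-manifolds, not normal forms of a given metric. A $\mathsf{G}$-equivariant diffeomorphism carries normal geodesics of $\g$ to normal geodesics of the transported metric and preserves isotropy groups. Moreover, every normal geodesic from $B_-$ is a translate $h\cdot c$. So the position of $\mathfrak{k}_+$ relative to $\mathfrak{k}_-$ along a normal geodesic, up to simultaneous conjugation, is an invariant of the metric. Concretely, the map realizing $\mathsf{K}_+\mapsto a\mathsf{K}_+a^{-1}$ is $(t,h)\mapsto(t,h\,b(t))$ with $b$ non-constant, and it destroys the form $\text{d}t^2+\g_t$.

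The paper's own Section~\ref{sec:next} shows the reduction is false in general. The diagonal $\mathsf{SO}(3)$-action on $S^2\times S^2$ also has $\mathsf{H}=\{e\}$ and $N_{\mathsf{G}}(\mathsf{H})_0=\mathsf{G}$. Yet the product of unit spheres has normal form $f_1^2=8\cos^2(t/\sqrt2)$, $f_2^2=8$, $f_3^2=8\sin^2(t/\sqrt2)$. Here $\sigma_3$ collapses at $t=0$ and $\sigma_1$ at $t=T$, and no equivariant diffeomorphism produces a normal form with a single collapsing axis.

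So on $\mathbf{CP}^2\sharp\overline{\mathbf{CP}}^2$ with $\mathsf{H}=\{e\}$ you must also treat diagonal Einstein metrics with, say, $f_3\to 0$ at $t=0$ and $f_1\to0$ at $t=T$, with $n=1$ smoothness at both ends. This is the same $\mathsf{SU}(2)$-manifold, because $N_{\mathsf{G}}(\mathsf{H})_0=\mathsf{SU}(2)$ conjugates any circle to any other. In this configuration the only ratios with boundary values at most $1$ at both ends are $f_1/f_2$ and $f_3/f_2$. Your argument then yields only $f_2\geq f_1,f_3$, which is the pattern the product metric above exhibits. No two $f_i$ are forced to coincide, so Remark~\ref{remark:2.4} is never reached.

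The missing ingredient is an argument excluding Einstein metrics in this orthogonal configuration. As far as I can see, the paper's proof makes the same reduction and does not address this case either. Separately, the paper also disposes of the actions with principal isotropy $\mathbf{Z}_n$, $n>1$ odd, via \cite[Remark 1.3]{Donovan2025}, whereas you treat only $\mathsf{H}=\{e\}$.
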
  \begin{proof}
 There is a family of $\mathsf{SU}(2)$ cohomogeneity-one actions on $\mathbf{CP}^2 \sharp \overline{\mathbf{CP}}^2$ with principal isotropy $\mathbf{Z}_n$ for $n>1$ odd, or trivial isotropy \cite{Parker1986, Hoelscher2010, GroveZiller2011Lifting}. From \cite[Remark 1.3]{Donovan2025}, if the isotropy is non-trivial, then the $\mathsf{SU}(2)$-invariant Einstein metric is $\mathsf{U}(2)$-invariant. Hence, from \cite{BerardBergery1982NouvellesEinstein, Derdzinski1983SelfDualKahler}, the metric is homothetic to the Page metric. So we may assume the principal isotropy is trivial $\mathsf{H} = \{ e \}$. From Lemma~\ref{lem:diagonal}, we may assume the metric is of the form \eqref{eqn:metric-form}.

Let \( \mathsf{K}_\pm\) be the two circle singular isotropy groups obtained from an admissible transverse curve $c \subset M$ with \(c(0):=p_-\) and $c(T) : = p_+$. Since $\mathsf{N}_{\mathsf{SU}(2)}(\mathsf{H})^\circ=\mathsf{SU}(2)$, and since all circle subgroups of \(\mathsf{SU}(2)\) are conjugate, choose $a \in \mathsf{N}_{\mathsf{SU}(2)}(\mathsf{H})^{\circ}$ such that $a \mathsf{K}_+ a^{-1} = \mathsf{K}_-$. From \cite[Remark 2.6]{VerdianiZiller2022}, changing the admissible transverse curve inside the component of $M$ containing $p_-$, from $p_-$ to $ap_+$, changes the representative of the second singular isotropy group from \( \mathsf{K}_+\) to \(a \mathsf{K}_+a^{-1}\), without changing the resulting class of
\(\mathsf{G}\)-invariant metrics up to \(\mathsf{G}\)-equivariant isometry. Hence, for the classification up to isometry, we may assume that the group diagram is $\{e\}\subset \{\mathsf{K}_-,\mathsf{K}_-\}\subset \mathsf{SU}(2)$.  Choose \(X_1\in\mathfrak{su}(2)\) generating \(\mathfrak k := \text{Lie}(\mathsf{K}_-)\). Under this change of admissible curve we pull back the invariant metric by the corresponding $\mathsf{G}$-equivariant diffeomorphism; the diagonal normal form is therefore preserved after relabeling the diagonal basis, and the collapsing direction at both endpoints may be taken to be $X_1$. The smoothness conditions at both singular orbits then give $f_1 \to 0$, $\frac{f_2}{f_3} \to 1$, and $\frac{f_3}{f_2} \to 1$. Consider the four ratios $\left \{ \frac{f_1}{f_2}, \frac{f_1}{f_3}, \frac{f_2}{f_3}, \frac{f_3}{f_2} \right \}$. At both endpoints, all four ratios have boundary value $\leq 1$. Suppose one of these ratios exceeds $1$. Let $t_0\in(0,T)$ be the interior point where the maximum among these ratios, say $\frac{f_i}{f_j}$, is achieved. Let $k$ be the remaining index. Then at $t_0$, $f_i(t_0) > f_j(t_0)$, and since $\frac{f_i}{f_j}(t_0) \geq \frac{f_k}{f_j}(t_0)$, it follows that $f_i^2> f_j^2$ and $f_i^2+f_j^2-f_k^2 \geq f_j^2 >0$. The maximum of $u_{ij}=\log(f_i/f_j)$ occurs at the same point $t_0 \in (0,T)$. Hence, from \eqref{eqn:main-eqns-for-u} we see that at $t_0$, \begin{eqnarray*}
        0 \ \geq \ u_{ij}''(t_0) \ = \ 4 \frac{(f_i^2 - f_j^2)(f_i^2+f_j^2-f_k^2)}{f_i^2 f_j^2 f_k^2} \ > \ 0,
    \end{eqnarray*}  and hence, $\frac{f_2}{f_3} \leq 1$ and $\frac{f_3}{f_2} \leq 1$. Therefore, $f_2 \equiv f_3$, and Remark~\ref{remark:2.4} implies that the metric is $\mathsf{U}(2)$-invariant. Hence, from \cite{BerardBergery1982NouvellesEinstein, Derdzinski1983SelfDualKahler,PetersenZhu1995}, the metric is homothetic to the Page metric.
\end{proof}

\section{\textbf{The $\mathsf{SO}(3)$-action on $S^2 \times S^2$}}\label{sec:next}
\noindent There are two cohomogeneity-one actions on $S^2 \times S^2$. The cohomogeneity-one action of $\mathsf{SO}(3) \times \mathsf{SO}(2)$ is handled by \cite{BerardBergery1982NouvellesEinstein, Derdzinski1983SelfDualKahler}, so we need only consider the action of $\mathsf{SO}(3)$. The cohomogeneity-one action of $\mathsf{SO}(3)$ on $S^2 \times S^2$ is the diagonal action $A \cdot (x,y) = (Ax,Ay)$, $A \in \mathsf{SO}(3)$, $x,y \in S^2$. The singular orbits are $\{ (x,x) : x \in S^2 \} \simeq S^2$ and $\{ (x,-x) : x \in S^2 \} \simeq S^2$. The principal isotropy is trivial $\mathsf{H} = \{ e \}$, and the singular orbits are both $S^2 \simeq \mathsf{SO}(3)/\mathsf{SO}(2)$.

\begin{theorem}\label{thm:SO(3)-S2xS2}
Let \(\g\) be a smooth \(\mathsf{SO}(3)\)-invariant Einstein metric on \(S^2\times S^2\). Then g is homothetic to the product of the round metrics on each factor. 
\end{theorem}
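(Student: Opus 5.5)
We follow the strategy of Theorem~\ref{thm:SO(3)-CP2}: produce a parallel (anti-)self-dual $2$-form, conclude via Lemma~\ref{lem:SD-Kahler} that $\g$ is K\"ahler--Einstein, and then invoke the classification of K\"ahler--Einstein surfaces. By Lemma~\ref{lem:diagonal} we may write $\g$ in the diagonal form \eqref{eqn:metric-form}. Both singular isotropy groups are circles $\mathsf{SO}(2)$, and the principal isotropy is trivial. As in the proof of Theorem~\ref{thm:SU(2)-CP2-blow-up}, since $\mathsf{N}(\mathsf{H})^\circ=\mathsf{SO}(3)$ and all circles are conjugate, we may change the admissible curve so that the same direction $\sigma_1$ collapses at both ends. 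The collapsing circle acts on the normal slice with some slope: the diagonal $\{(x,x)\}$ and antidiagonal $\{(x,-x)\}$ have normal bundles of degree $\pm 2$. The smoothness conditions (Verdiani--Ziller, Bettiol--Krishnan) therefore read
\[
f_1^2=4t^2+t^4\Phi_1(t^2),\quad f_2^2+f_3^2=\Phi_0(t^2),\quad f_2^2-f_3^2=t^2\Phi_2(t^2),
\]
with analogous conditions at $s=T-t$, where the normalizations of the collapsing speed are dictated by the slope.

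\textbf{Step 1: asymptotics.} Compute $A_i$ and $B_i$ near each endpoint from \eqref{eqn:SD-coefficieints}, \eqref{eqn:ASD-coefficients} and \eqref{eqn:lambda-constraint}, exactly as in \eqref{eqn-B1-claim-1} and \eqref{eqn:tilde-Bi}. Because the orientations induced on the two normal disks differ relative to the product orientation, we expect the following pattern. At one end the singular coefficient is $A_1\sim \frac{c}{t}$ with $A_2,A_3=\frac{\beta_- -12}{2q_-^2}t+O(t^2)$ and $B_2,B_3=O(t^3)$. At the other end the roles of $A$ and $B$ are swapped, with a constant $\beta_+=\lambda q_+^2$, where $q_\pm=f_2=f_3$ at the endpoints.

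\textbf{Step 2: cone argument.} Repeat the Claim in Theorem~\ref{thm:SO(3)-CP2}. If $\beta_+\neq 12$, the solution lies near $s=0$ in a cone $\{\varepsilon B_2\ge0,\ \varepsilon B_3\ge0,\ B_1\le 0\}$, which is backward invariant under \eqref{eqn:Bi-systems-fix}. Integrating the $B_2$-equation with the integrating factor $\mu(t)=t+O(t^2)$ coming from the other end then forces $B_2\equiv0$ there, contradicting strictness. Hence $\beta_+=12$. The regular-singular Lemma~\ref{lem:regular-singular-germ}, with indicial matrix $\begin{pmatrix}-\frac12&\frac32\\ \frac32&-\frac12\end{pmatrix}$ (eigenvalues $-2$ and $1$), then gives $B_2=B_3\equiv0$ near $s=0$. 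Invariance of $\{B_2=B_3=0\}$ under \eqref{eqn:Bi-systems-fix} propagates this to all of $(0,T)$, so $\Omega_1^-$ is parallel by \eqref{eqn:d-Omega-}. The removable-singularity claim from Theorem~\ref{thm:SO(3)-CP2} extends $\Omega_1^-$ smoothly across both codimension-$2$ singular orbits. Lemma~\ref{lem:SD-Kahler} then shows that $\g$ is K\"ahler--Einstein with $\lambda>0$.

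\textbf{Step 3: conclusion and main obstacle.} A complex surface diffeomorphic to $S^2\times S^2$ with $c_1>0$ is $\mathbf{CP}^1\times\mathbf{CP}^1$. By Bando--Mabuchi its K\"ahler--Einstein metric is unique up to homothety and automorphisms, namely the product of round metrics. The main obstacle is Step~1: the correct identification of the smoothness conditions, and in particular which of $A_i$ or $B_i$ carries the $-\frac{3}{2s}$ pole at each end, since the cone argument requires the sign pattern to be incompatible at the two ends. If the two ends instead produce the same type of asymptotics, we would first try running the argument with $A_i$ (producing a self-dual parallel form), and otherwise fall back on the maximum-principle argument for $u_{23}$ as in Theorem~\ref{thm:SU(2)-CP2-blow-up}. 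That fallback would give $f_2\equiv f_3$ and an extra circle symmetry, reducing to the B\'erard-Bergery--Derdzi\'nski classification.
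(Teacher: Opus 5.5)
\paragraph{The gap: the reduction to a common collapsing direction.} The problem is the normalization you borrow from Theorem~\ref{thm:SU(2)-CP2-blow-up}, namely that the same direction $\sigma_1$ can be made to collapse at both ends. This reduction excludes the very metric you are trying to characterize.

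Replacing $\mathsf{K}_+$ by $a\mathsf{K}_+a^{-1}$ only changes the representative of the $\mathsf{SO}(3)$-manifold. The reference curve of the new presentation is not a normal geodesic of the pulled-back metric, so that metric is not of the form \eqref{eqn:metric-form} with respect to it. For a fixed invariant metric, every normal geodesic leaving the singular orbit at $c(0)$ is $k\cdot c$ with $k\in\mathsf{K}_-$. Hence $\mathsf{K}_+$ is determined up to $\mathsf{K}_-$-conjugation, and the angle between the axes of $\mathsf{K}_{\pm}$ is an equivariant isometry invariant.

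The product metric is not in your normal form. For unit round factors and the normalization \eqref{eqn:structure-equations-sigma}, take the normal geodesic $c(t)=(R_{-t/\sqrt2}e_3,\,R_{t/\sqrt2}e_3)$, where $R_\varphi$ is the rotation about $e_2$. Along it one gets
\[
f_1=2\sqrt2\cos(t/\sqrt2),\qquad f_2=2\sqrt2,\qquad f_3=2\sqrt2\sin(t/\sqrt2),\qquad 0\le t\le \pi/\sqrt2 .
\]
So $\sigma_3$ collapses at the diagonal and $\sigma_1$ at the antidiagonal, and no normal geodesic has the same collapsing direction at both ends.

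The configuration you discard is exactly the one the paper treats. With $\sigma_3$ collapsing at $t=0$ and $\sigma_1$ at $t=T$, the paper shows $\Omega_3^+$ is parallel, i.e.\ $A_1=A_2=0$:
\begin{itemize}
\item At $t=0$, $A_3=2/t+O(t)$ and $A_1,A_2=\frac{\delta}{2q^2}t+O(t^3)$, with $\delta=8-\lambda q^2$.
\item At the far end, $A_1(T-s)=O(s^3)$, and the $A_1$-equation has integrating factor $\mu(s)=s+O(s^2)$. This is what makes the cone argument close and force $\delta=0$.
\item Lemma~\ref{lem:regular-singular-germ} then applies with indicial matrix $\begin{pmatrix}-1&2\\2&-1\end{pmatrix}$.
\end{itemize}
For the product metric, $\Omega_3^+$ is a multiple of $\omega_1+\omega_2$: complex along the diagonal and Lagrangian along the antidiagonal.

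\paragraph{Consequences for your Steps 2 and 3.} Your Step~2 outcome is incompatible with Step~3. Suppose $\Omega_1^-$ is parallel with $\sigma_1$ collapsing at both ends. Then $|\Omega_1^-(X_2^*,X_3^*)|=f_2f_3=|X_2^*|\,|X_3^*|$, so both singular orbits would be complex curves for $\text{J}_{\Omega}$. But the only K\"ahler forms of $\g_{\text{rd}}\oplus\g_{\text{rd}}$ are multiples of $\omega_1\pm\omega_2$, and each of these makes one of the diagonal and antidiagonal Lagrangian.

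In fact, one can check that the cone argument, run at both ends of your normal form, makes both $\Omega_1^+$ and $\Omega_1^-$ parallel. That forces $L_2=L_3=0$, $R_3+R_1=R_2$ and $R_1+R_2=R_3$, hence $R_1=0$, which is impossible. So your configuration carries no Einstein metric at all. Your fallback has the same defect: it would produce $f_2\equiv f_3$ and an extra circle of symmetry, which the product metric does not have (its $f_i$ above are pairwise distinct).

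\paragraph{Errors carried over from $\mathbf{CP}^2$.} Several details do not transfer:
\begin{itemize}
\item With $f^2=4t^2+\cdots$ the poles are $2/t$ and $-2/s$, not $-3/(2s)$.
\item The critical value is $\lambda q^2=8$, not $12$.
\item The indicial matrix is the one above, with eigenvalues $-3$ and $1$.
\item The integrating-factor step must be run on the equation $y'+p\,y=\cdots$ of the index that collapses at the far end. There $p=R_2+R_3-3R_1+B_1\sim+1/t$ and $\mu\sim t$. For $B_2$ one has $p=-(3R_2-R_3-R_1)+B_2\sim-1/t$, so $\mu\sim 1/t$ and no constraint results.
\end{itemize}
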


\begin{proof}
The proof is very similar to the proof of Theorem~\ref{thm:SO(3)-CP2}. The smoothness conditions are specified in \cite[Section 4, Eq. (4.1)]{VerdianiZiller2022}. Writing the metric in the form \eqref{eqn:metric-form}, after relabeling, the \(\sigma_3\)-direction collapses at \(t=0\), while the \(\sigma_1\)-direction collapses at \(t=T\). Thus the smoothness conditions near $t=0$ are \begin{eqnarray*}
    f_3^2 = 4t^2 + t^4 \Phi_3(t^2), \qquad f_1^2+f_2^2 = \Phi_0(t^2), \qquad f_1^2-f_2^2=t^2 \Phi_{12}(t^2),
\end{eqnarray*}  with $\Phi_0(0)=2q^2>0$. Writing $s : = T-t$ and $\tilde{f}_i(s) : = f_i(T-s)$, the smoothness conditions near $s=0$ are \begin{eqnarray*}
    \tilde{f}_1^2 = 4s^2 + s^4\Psi_1(s^2), \qquad \tilde{f}_2^2+\tilde{f}_3^2=\Psi_0(s^2), \qquad \tilde{f}_2^2  - \tilde{f}_3^2 =s^2\Psi_{23}(s^2),
\end{eqnarray*} where $\Psi_0(0)=2Q^2>0$. From \eqref{eqn:d-Omega+}, the self-dual $2$-form $\Omega_3^+$ is parallel if $A_1=A_2=0$. The common vanishing locus $\mathcal{K} = \{ A_1 = A_2 = 0 \}$ is invariant under \eqref{eqn:Ai+Bi-systems}. Set $\delta : = 8 - \lambda q^2$. Then \eqref{eqn:lambda-constraint} and \eqref{eqn:SD-coefficieints} yield \begin{eqnarray*}
    A_3 \ = \ \frac{2}{t} + O(t), \qquad A_i \ = \ \frac{\delta}{2q^2} t + O(t^3), \qquad i=1,2.
\end{eqnarray*}

\noindent The same argument used in the proof of Theorem~\ref{thm:SO(3)-CP2} shows that $\delta =0$. Indeed, if $\delta \neq 0$, we set  \(\varepsilon=\operatorname{sign}(\delta)\). Then near \(t=0\), the solution lies in the cone $\mathcal{C}_{\varepsilon} : = \{ \varepsilon A_1 \geq 0, \varepsilon A_2 \geq 0, A_3 \geq 0 \}$. The cone is invariant under \eqref{eqn:Ai+Bi-systems}, and the inequalities defining $\mathcal{C}_{\varepsilon}$ are strict on compact subintervals of $(0,T)$. Using the smoothness expansion at $s=0$, together with \eqref{eqn:L-derivative}, \eqref{eqn:lambda-constraint}, and \eqref{eqn:Ai+Bi-systems}, we have \begin{eqnarray*}
    A_1(T-s)=O(s^3), \qquad R_2+R_3-3R_1-A_1=\frac{1}{s}+O(s).
\end{eqnarray*} Set \(y(s):=A_1(T-s)\). The evolution equation for $A_1$ has an integrating factor with $\mu(s) = s + O(s^2)$. The argument for Theorem~\ref{thm:SO(3)-CP2} can be followed almost verbatim to show that $y \equiv 0$, and hence, $A_2 A_3 \equiv 0$ near $s=0$. Since the inequalities are strict on compact subintervals of $(0,T)$, we similarly deduce that $\delta =0$. Let $\textbf{X}_A:=\begin{pmatrix}
        A_1\\
        A_2
    \end{pmatrix}$. Near \(t=0\), write
\[
    t\textbf{X}_A'(t)= \begin{pmatrix}
        -1&2\\
        2&-1
    \end{pmatrix} \textbf{X}_A(t)+t\textbf{B}_A(t) \textbf{X}_A(t),
\] where the coefficients of $\textbf{B}_A$ are bounded. Since $\textbf{X}_A(t) = O(t^3)$ and the eigenvalues of the indicial matrix are $-3$ and $1$, Lemma~\ref{lem:regular-singular-germ} implies that $A_1=A_2=0$ near $t=0$.  By invariance of the locus \(\{A_1=A_2=0\}\), this holds on the whole regular interval. The argument in Theorem~\ref{thm:SO(3)-CP2} shows that $\Omega_3^+$ extends smoothly as a parallel self-dual $2$-form on $M$. Hence, Lemma~\ref{lem:SD-Kahler} implies that g is K\"ahler--Einstein. The only complex surface admitting a K\"ahler--Einstein metric with $\lambda>0$ and diffeomorphic to $S^2 \times S^2$ is $\mathbf{CP}^1 \times \mathbf{CP}^1$, endowed with the product of the Fubini--Study metrics (see \cite{Tian1987KahlerEinsteinCertain, Tian1990CalabiConjectureSurfaces} and \cite{barth2004compact} for the general classification). Since the K\"ahler--Einstein metric is unique up to homothety \cite{BandoMabuchi1987}, this completes the proof. 

\end{proof}

\section{\textbf{Orbifold Einstein metrics on $S^4$}}

\begin{theorem}
Let $\mathcal{O}_k$ denote the $\mathsf{SO}(3)$-orbifold on $S^4$ with singular orbit $B_+ \simeq \mathbf{RP}^2$ and normal angle $2\pi/k$. If g is an $\mathsf{SO}(3)$-invariant Einstein orbifold metric on $\mathcal{O}_k$, then g is homothetic to Hitchin's metric.  
\end{theorem}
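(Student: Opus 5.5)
The plan is to run the proof of Theorem~\ref{thm:SO(3)-S4} with one endpoint modified, and to identify the resulting self-dual (or anti-self-dual) Einstein metric with Hitchin's. On the principal part the metric is diagonal, since $\mathsf{H}\simeq\mathbf{Z}_2\oplus\mathbf{Z}_2$ splits $\mathfrak{so}(3)$ into three distinct characters; so the ODE system \eqref{eqn:L-derivative}--\eqref{eqn:lambda-constraint}, together with \eqref{eqn:Ai+Bi-systems} and \eqref{eqn:ai-ode}, applies verbatim. The smooth end $B_-\simeq\mathbf{RP}^2$ is placed at $t=0$ and has exactly the expansions \eqref{claim-eqn1}--\eqref{claim-eqn-2}. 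The orbifold end $B_+$ is placed at $s=T-t=0$. Passing to the local uniformizing $k$-fold cover of the normal disk, the collapsing direction satisfies $\tilde f_2=\tfrac{4}{k}s+O(s^3)$, while $\tilde f_1$ and $\tilde f_3$ obey the same even/odd conditions as before. Consequently $R_1,R_3=\tfrac{k}{4s}\pm\tfrac{c}{2k'}+O(s)$ and $L_2=-1/s+O(s)$, so that
\[
A_2=-\frac{1}{s}+\frac{k}{2s}+O(s), \qquad A_1,A_3=O(1).
\]
The coefficient $(k-2)/(2s)$ of $A_2$ now changes sign with $k$; this is the crucial difference from the smooth case $k=1$.

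The first step is the Claim on $\alpha=\lambda h^2$, with its sign conclusion chosen according to the orientation compatible with Hitchin's metric. For $k\ge 3$ the right asymptotics force $A_2>0$ near $s=0$, so the cone $\mathcal C_A=\{A_1\ge0,\ A_2\le0,\ A_3\le0\}$ can no longer trap the solution. I therefore expect to repeat the cone argument on the other side: if $\alpha>12$ the solution enters $\mathcal C_A$, which contradicts $A_2\to+\infty$. If $\alpha=12$, Lemma~\ref{lem:regular-singular-germ} again forces $A_2=A_3\equiv0$, which is also incompatible with the right end. Hence $\alpha<12$ and the solution lies in the positive cone $\{A_i\ge0\}$. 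The case $k=2$ ($A_2=O(s)$) needs its own indicial analysis at $s=0$; $k=1$ is Theorem~\ref{thm:SO(3)-S4}.

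The second step shows that $F_2=a_1-a_2$ and $F_3=a_1-a_3$ vanish identically. In the positive cone the system \eqref{F-ODE-System} still has the indicial matrix with eigenvalues $-3,0$ at $t=0$. When $\alpha\neq36$ the solution is trapped in $\{\varepsilon F_2\ge0,\ \varepsilon F_3\ge0\}$, and this step requires a sign for $A_1-A_2$ and $A_1-A_3$. I would try to obtain these signs from the $A$-system, by showing that $A_1-A_j$ cannot change sign because $A_1-A_2$ satisfies a linear equation with a forcing term of definite sign. At the orbifold end, $F_3=O(s)$ as before, and the integrating factor argument must be redone with $-(A_1+2A_2)=\tfrac{2-k}{s}+O(1)$. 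Its sign depends on $k$, so the conclusion $y\equiv0$ must come from a different boundary term. \textbf{This is the step I expect to be the main obstacle}, and I would first try replacing $F_3$ by $s^{k-2}F_3$ so that the same monotonicity works. The outcome is $W^{+}=0$ (or $W^-=0$), i.e.\ the metric is half-conformally flat Einstein.

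The final step identifies the metric. A self-dual Einstein $\mathsf{SO}(3)$-invariant diagonal metric satisfies a reduced first-order system: the conditions $a_i=\lambda/3$ together with \eqref{def-ai-bi-eigenvalues} give Tod--Hitchin's Painlevé VI reduction, equivalently the first-order Nahm-type system for the $A_i$. Hitchin showed that the solutions smooth at the $\mathbf{RP}^2$ end and with cone angle $2\pi/k$ at the other end form a unique solution up to homothety. I would invoke this directly: the one-parameter family of solutions regular at $t=0$ is parametrized, after scaling, by $\alpha$, and the cone angle at $T$ is a monotone function of $\alpha$ in Hitchin's analysis. This pins $\alpha$ down and gives $\g=\g_{\text{H},k}$ up to homothety.
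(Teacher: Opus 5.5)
\textbf{There is a genuine gap in your second step, and it is not technical.} On $\Lambda^2_+$ the conclusion you are aiming for is false for $k\geq 2$, and your own first step, which is correct, shows this. You import the smooth-end expansions \eqref{claim-eqn1}--\eqref{claim-eqn-2}, and these give $F_2,F_3=\frac{36-\alpha}{2h^2}+O(t)$ as $t\to0$. For $k\geq3$ you then prove $\alpha<12$. Hence $F_2,F_3\to\frac{36-\alpha}{2h^2}>\frac{12}{h^2}>0$, so $a_1-a_2$ and $a_1-a_3$ cannot vanish identically, and $W^+\neq0$. The case $k=2$ fails the same way. There one must use the correct orbifold conditions $f_1-q,\ f_3-q=O(s^2)$ rather than the smooth-case even/odd conditions you wrote. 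With them, $A_1,A_3=O(s)$, so $a_2=2R_2A_2-A_1A_3\to\frac{4(k-2)}{kq^2}=0$ at the orbifold end, and $a_i\equiv\lambda/3$ would force $\lambda=0$.

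The obstacles you flagged are symptoms of this. $A_1-A_2$ must change sign: it tends to $+\infty$ as $t\to0$, and for $k\geq3$ to $-\infty$ as $s\to0$. So no argument can give it a definite sign, and no rescaling such as $s^{k-2}F_3$ will close the integrating-factor step. In the orientation fixed by your set-up ($e^0=\text{d}t$, smooth $\mathbf{RP}^2$ at $t=0$), it is $W^-$, not $W^+$, that vanishes for Hitchin's metrics. Your hedge ``(or $W^-=0$)'' does not rescue the plan, because all of your cone and indicial analysis takes place on $\Lambda^2_+$.

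\textbf{The missing idea is to work on $\Lambda^2_-$ with the $B_i$ and $b_i$.} At the orbifold end one gets $B_2=-\frac{k+2}{2s}+O(s)$ and $B_1,B_3=\frac{\beta-\Theta_k}{2q^2}s+O(s^2)$, where $\beta=\lambda q^2$ and $\Theta_k=4+8/k$. The singular coefficient now has a sign independent of $k$: your $A_2=-\frac1s+\frac{k}{2s}$ and this $B_2=-\frac1s-\frac{k}{2s}$ are exchanged by $k\mapsto-k$. The argument then runs as follows.
\begin{enumerate}
\item Backward invariance of $\{B_1,B_2,B_3\leq0\}$ contradicts $B_1\sim\frac1{2t}$ at $t=0$. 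Together with the indicial argument (eigenvalues $-(k+1)$ and $1$) in the borderline case, this gives $\beta>\Theta_k$.
\item The backward-invariant cone $\{B_1\geq0,\ B_2\leq0,\ B_3\geq0\}$ then supplies exactly the signs $B_2-B_1<0$ and $B_2-B_3<0$. These trap $E_1=b_2-b_1$ and $E_3=b_2-b_3$ in a sign cone; the indicial eigenvalues $-(k+2)$ and $0$ dispose of the case $\beta=3\Theta_k$.
\item The integrating-factor argument is run at the smooth end, using $2B_1+B_2=\frac1t+O(1)$ and $b_2-b_3\to0$, where the sign is favourable.
\end{enumerate}
This handles $k=2$ uniformly with $k\geq3$. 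Once $W^-=0$ is established, you do not need your unproved claim that the cone angle is monotone in $\alpha$. The classification of $\mathsf{SO}(3)$-invariant self-dual Einstein orbifold metrics \cite[Theorem 12.1]{GroveWilkingZiller2008} identifies $\g$ with $\g_{\text{H},k}$ up to homothety directly.
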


\begin{proof}
The case \(k=1\) is Theorem~\ref{thm:SO(3)-S4}. Hence assume \(k \geq 2\). Put the smooth end at \(t=0\), the orbifold end at \(t=T\), and write \(s=T-t\) near the right endpoint. We choose the invariant coframe so that the \(\sigma_1\)-direction collapses at \(t=0\), and the \(\sigma_2\)-direction collapses at \(t=T\). Thus, on the regular set, write the metric in the form \eqref{eqn:metric-form}.  At the left end, the smoothness conditions are the same as in Theorem~\ref{thm:SO(3)-S4}. We will only use
\[
B_1=\frac{1}{2t}+O(t), \qquad B_2,B_3=O(1),
\]
and
\[
b_2-b_3\to 0, \qquad 2B_1+B_2=\frac1t+O(1).
\]

\noindent Set $q:=f_1(T)=f_3(T)>0$ and $\beta:= \lambda q^2$, and $\Theta_k : = 4 + 8/k$.  The orbifold smoothness conditions at the right end give $f_2(T-s)=\frac{4}{k}s+O(s^3)$, while \(f_1(T-s)\) and \(f_3(T-s)\) tend to \(q\). Since \(k\geq 2\), the
singular orbit is totally geodesic in an orbifold chart, and hence \(f_1(T-s)-q\) and \(f_3(T-s)-q\) are \(O(s^2)\). For \(k>2\), the quadratic
terms of \(f_1\) and \(f_3\) agree. Write \( \tilde{f}_i(s):=f_i(T-s)\), and set \(\rho:=4/k\). Then
\[
\tilde{f}_2(s)= \rho s+O(s^3),\qquad
\tilde{f}_1(s)=q+us^2+O(s^3),\qquad
\tilde{f}_3(s)=q+vs^2+O(s^3).
\]
For \(k>2\), smoothness gives \(u=v\). For \(k=2\), the coefficient \(u-v\) may be non-zero. A direct expansion gives
\[
B_2=-\left(1+\frac2\rho\right)\frac1s+O(s),
\]
and
\[
B_1-B_3 \ = \ 
\left(-\frac2q+\frac4{\rho q}\right)(u-v)s+O(s^2) \ = \ 
\frac{k-2}{q}(u-v)s+O(s^2).
\]
Thus the linear terms of \(B_1\) and \(B_3\) agree for all \(k\geq2\). Moreover,
\[
\frac{B_1+B_3}{2}
=
\left(-\frac{u+v}{q}-\frac{\rho}{q^2}\right)s+O(s^2).
\]
From \eqref{eqn:lambda-constraint}, $\lambda= 4/q^2 - \frac{2}{q}(u+v)$.  Hence, with \(\beta=\lambda q^2\) and \(\Theta_k=4+8/k=4+2\rho\), \begin{eqnarray}\label{hitchin-eq1}
    B_2=-\frac{k+2}{2s}+O(s),\qquad B_i=\frac{\beta-\Theta_k}{2q^2}s+O(s^2),\quad i=1,3.
\end{eqnarray} Consequently,
\[
b_2\to \frac{\Theta_k}{q^2},\qquad
b_1,b_3\to \frac{\beta-\Theta_k}{2q^2}.
\]

\vspace{0.2cm}

\noindent \emph{Claim.} The constant $\beta>\Theta_k$. \begin{proof}
    First suppose that $\beta < \Theta_k$. Then for $s>0$ sufficiently small, the solution lies in the cone $\{ B_1 \leq 0, B_2 \leq 0, B_3 \leq 0 \}$. This cone is backward invariant under \eqref{eqn:Bi-systems-fix}. Indeed, on the face \(B_i=0\), one has \(B_i'=B_jB_k\geq 0\). Hence integrating backwards from the right end gives \(B_i\leq 0\) on the whole regular interval. This violates the asymptotics for $B_1 = \frac{1}{2t} + O(t)$ near the left singular orbit. Suppose $\beta = \Theta_k$ and let $\widetilde{B}_1(s) : = B_1(T-s)$ and $\widetilde{B}_3(s) : = B_3(T-s)$. Set $\textbf{X}_{\widetilde{B}}(s) : = \begin{pmatrix}
        \widetilde{B}_1(s) \\ \widetilde{B}_3(s)
    \end{pmatrix}$. From \eqref{hitchin-eq1}, we can write \begin{eqnarray*}
        s \textbf{X}_{\widetilde{B}}'(s) &=& \begin{pmatrix}
-\frac{k}{2} & \frac{k+2}{2}\\
\frac{k+2}{2} & -\frac{k}{2}
\end{pmatrix} \textbf{X}_{\widetilde{B}}(s) + s \textbf{B}_{\widetilde{B}}(s) \textbf{X}_{\widetilde{B}}(s),
    \end{eqnarray*} where the coefficients of $\textbf{B}_{\widetilde{B}}$ are bounded. If $\beta = \Theta_k$, then $\textbf{X}_{\widetilde{B}}(s) = O(s^2)$. Since the eigenvalues of the indicial matrix are $-(k+1)$ and $1$, Lemma~\ref{lem:regular-singular-germ} implies $\textbf{X}_{\widetilde{B}}(s) \equiv 0$ for all $s>0$ sufficiently small. The common vanishing locus $\{ B_1 = B_3 =0 \}$ is invariant under \eqref{eqn:Bi-systems-fix}, and hence, $B_1=B_3=0$ on the regular interval. Since this violates the asymptotics for $B_1$ near $t=0$, we must have $\beta>\Theta_k$.
\end{proof}

\noindent From the claim, it follows that near $s=0$, the solution lies in the cone $\mathcal{C}_B := \{ B_1 \geq 0, B_2 \leq 0, B_3 \geq 0 \}$. Since this cone is backward invariant, the solution lies in this cone for the whole regular interval. The inequalities are strict on compact subintervals of $(0,T)$. To show that $b_1=b_2=b_3$, set $E_1 : = b_2-b_1$ and $E_3 : = b_2 - b_3$. From \eqref{eqn:bi-ode}, we have  \begin{eqnarray*}
    E_1' &=& (B_2-B_1)E_3-(B_2+2B_3)E_1, \\
    E_3' &=& (B_2-B_3)E_1-(2B_1+B_2)E_3.
\end{eqnarray*} Near $s =0$, \begin{eqnarray*}
    E_1, E_3 \ = \ \frac{3\Theta_k-\beta}{2q^2} + O(s). 
\end{eqnarray*} Suppose $\beta = 3\Theta_k$. Set $\widetilde{E}_i : = E_i(T-s)$ and write $\textbf{X}_{\widetilde{E}}(s) : = \begin{pmatrix}
    \widetilde{E}_1(s) \\ \widetilde{E}_3(s)
\end{pmatrix}$. The asymptotics near $s=0$ allow us to write \begin{eqnarray*}
    s \textbf{X}_{\widetilde{E}}'(s) &=& \begin{pmatrix}
-\frac{k+2}{2} & \frac{k+2}{2}\\
\frac{k+2}{2} & -\frac{k+2}{2}
\end{pmatrix} \textbf{X}_{\widetilde{E}}(s) + s \textbf{B}_{\widetilde{E}}(s) \textbf{X}_{\widetilde{E}}(s).
\end{eqnarray*} The eigenvalues of the indicial matrix are $-(k+2)$ and $0$. Since $\textbf{X}_{\widetilde{E}}=O(s)$, Lemma~\ref{lem:regular-singular-germ} implies $E_1=E_3=0$ for all $s>0$ sufficiently small. By invariance, this holds on the whole regular interval.  Suppose now that $3\Theta_k - \beta \neq 0$ and define $\varepsilon : = \text{sign}(3\Theta_k-\beta)$. Then for $s>0$ sufficiently small, the solution lies in $\mathcal{C}_E : = \{ \varepsilon E_1 \geq 0, \varepsilon E_3 \geq 0 \}$. The cone $\mathcal{C}_E$ is backward invariant under the evolution equations for $E_1,E_3$, and hence, the solution lies in $\mathcal{C}_E$ on the whole regular interval. Near $t=0$, $E_3 = b_2 - b_3 \to 0$, while 
\[
2B_1+B_2=\frac1t+O(1).
\]
The integrating factor argument used in Theorem~\ref{thm:SO(3)-S4} can be followed almost verbatim to show that $E_1=E_3=0$ on the whole regular interval. Hence, in all cases, $b_1=b_2=b_3 = \frac{\lambda}{3}$, and hence, $\mathfrak{R} \vert_{\Lambda_-^2} = \frac{\lambda}{3} \text{Id}_{\Lambda_-^2}$. The metric g is therefore self-dual. From \cite[Theorem 12.1]{GroveWilkingZiller2008}, the metric is homothetic to Hitchin's orbifold metric \cite{Hitchin1996NewFamilyEinsteinMetrics}. 
\end{proof}

\small 

\subsection*{Funding.} The author was supported by the Australian Research Council Discovery Project DP220102530.

 \normalsize

\bibliographystyle{alpha-author}
\bibliography{ref-clean}

\end{document}